\definecolor{webgreen}{rgb}{0,.5,0}
\definecolor{webbrown}{rgb}{.6,0,0}
\newcommand{\ul}{\underline}
\begin{document}

\vspace*{2.1cm}%2.1cm

\theoremstyle{plain}
\newtheorem{theorem}{Theorem}
\newtheorem{corollary}[theorem]{Corollary}
\newtheorem{lemma}[theorem]{Lemma}
\newtheorem{proposition}[theorem]{Proposition}
\newtheorem{obs}[theorem]{Observation}
\newtheorem{claim}[theorem]{Claim}

\theoremstyle{definition}
\newtheorem{definition}[theorem]{Definition}
\newtheorem{example}[theorem]{Example}
\newtheorem{remark}[theorem]{Remark}
\newtheorem{conjecture}[theorem]{Conjecture}
\newtheorem{question}[theorem]{Question}

\begin{center}

\vskip 1cm

{\Large\bf Total coloring of regular graphs of girth = degree + 1} %
\vskip 5mm
\large
Italo J. Dejter

University of Puerto Rico

Rio Piedras, PR 00936-8377

\href{mailto:italo.dejter@gmail.com}{\tt italo.dejter@gmail.com}
\end{center}

%MSC code: 05C15; 05C38;; 05C69; 05C70; 94B25

\begin{abstract} 
Let $2\le k\in\mathbb{Z}$. A total coloring of a
 $k$-regular simple graph via $k+1$ colors is an {\it efficient total coloring} if each color yields an efficient dominating set, where the efficient domination condition applies to the restriction of each color class to the vertex set. In this work, focus is set upon graphs of girth $k+1$.  
Efficient total colorings of finite connected simple cubic graphs of girth 4 are constructed starting at the 3-cube. 
It is conjectured that all of them are obtained by means of four basic operations. 
In contrast, the Robertson 19-vertex $(4,5)$-cage, the alternate union $Pet^k$ of a (Hamilton) $10k$-cycle with $k$ pentagon and $k$-pentagram $5$-cycles, for $k>1$ not divisible by 5, and its double cover $Dod^k$, contain TCs that are nonefficient. 
Applications to partitions into 3-paths and 3-stars are given.
\end{abstract}

\section{Introduction}\label{s0}

Given a simple connected graph $\Gamma$, a {\it total coloring} or {\it TC} of $\Gamma$ is a color assignment for the vertices and edges of $\Gamma$ such that no two incident or adjacent elements (vertices or edges)
are assigned the same color. A recent survey \cite{tc-as} contains an updated bibliography on TCs, with the TC Conjecture, posed independently by Behzad \cite{B1,B2} and by Vizing \cite{V}, that asserts that the total chromatic number of $\Gamma$ (namely, the least number of colors required by a TC of $\Gamma$) be either $\Delta(\Gamma)+1$ or $\Delta(\Gamma)+2$, where $\Delta$ is the largest degree of any vertex of $\Gamma$. 

 The TC Conjecture was established for cubic graphs \cite{Feng,Mazzu,Rosen,Vi}, meaning that the total chromatic number of cubic graphs is either 4 or 5. To decide whether a cubic graph $\Gamma$ has total chromatic number $\Delta(\Gamma)+1$, even for bipartite cubic graphs, is NP-hard \cite{Arroyo}.

In the present work, we study the relations between total colorings of $k$-regular graphs of girth $k+1$ ($1<k\in\mathbb{Z}$) with the presence of efficient dominating sets  \cite{worst,Tomai,D73,Deng, EDS,Knor} in those graphs. For such purpose, we introduce the following definition.

\begin{definition}\label{ahora} A coloring of a connected $k$-regular simple graph $\Gamma$ ($2\le k\in\mathbb{Z}$) is said to be an {\it efficient TC}, or {\it ETC}, (and $\Gamma$ said to be {\it ETCed}), if:
\begin{enumerate}
\item[\bf(a)] (TC condition) each $v\in V(\Gamma)$ together with its neighbors are assigned all the colors in $[k+1]=\{0,1,\ldots,k\}$ via a bijection $N[v]=N(v)\cup\{v\}\leftrightarrow[k+1]$, where $N[v]$ and $N(v)$ are the {\it closed neighborhood} of $v$ and the {\it open neighborhood} of $v$, respectively \cite{D73}; 
\item[\bf(b)] the TC in item (a) partitions $V(\Gamma)$ into $k+1$ {\it efficient dominating sets} ({\it EDS}), also called {\it perfect codes}, namely independent (stable) subsets $S_i\subseteq V(\Gamma)$ such that for any $v\in V(\Gamma)\setminus S_i$, $|N[v]\cap S_i|=1$,
where $i$ varies in $[k+1]$, \cite{worst,Tomai,D73,Deng,EDS,Knor}.
\end{enumerate}\end{definition}

\noindent Under conditions (a)-(b), it is seen that the total chromatic number of $\Gamma$ is $\Delta(\Gamma)+1$. 

\begin{remark}\label{3j}
For $1<j\in\mathbb{Z}$, consider the $3j$-cycle graph $C_{3j}=(v_1,e_1,v_2,e_2,\ldots,e_{3j-1},v_{3j},$ $e_{3j})$, where $e_i$ is the edge with end-vertices $v_i$ and $v_{i+1}$, for $1\le i\le 3j-1$, and $e_{3j}$ is the edge with end-vertices $v_{3j}$ and $v_1$. Let the vertices and edges of $C_{3j}$ be respectively colored $(0,1,2,0,1,2,\ldots,0,1,2)$. This clearly yields an ETC of $C_{3j}$. 
\end{remark}

In the rest of this work, finite connected simple $k$-regular graphs $\Gamma$ ($k\ge 2$) of girth $k+1$ 
are dealt with. Our purpose is to determine ETCs of such graphs. Some of these ETCs yield {\it edge-girth colorings} (see Definition~\ref{egc}, below) on the prism $\Gamma\square K_2$, where $K_2=P_2$ is the complete graph on two vertices, that is the path $P_k$ consisting of $k=2$ vertices. 
  
 \begin{definition}\label{egc} Let $\Gamma$ be a finite connected $k$-regular simple graph of girth $k+1$.
An {\it edge-girth coloring}, or {\it EGC}, of $\Gamma$ is a proper edge coloring via $k+1$ colors, each girth cycle colored with $k+1$ colors, each color used precisely once. 
\end{definition}

In order to present our results, we need every girth cycle $C$ of $\Gamma$ to be colored with $k$ colors, each color used exactly once on the vertices of $C$ and exactly once on the edges of $C$. This leads  to the following Definition~\ref{hoy}, in which, in addition, the total coloring of the girth cycles is combined with the concept of ETC in Definition~\ref{ahora}. The final Subsection~\ref{7} shows that such a combination of total coloring with efficient domination is not anymore the case for $k=4$, see Remark~\ref{alu}. 

\begin{definition}\label{hoy} A {\it vertex-edge-girth coloring}, or VEGC, of a connected simple graph $\Gamma$ of girth $k$ is a TC of $\Gamma$ in which each girth cycle is colored with $k$ colors, each color used just once on vertices and also just once on edges. In addition, an ETC of $\Gamma$ that is also VEGC will be said to be an {\it efficient total girth coloring}, or {\it ETGC}, of $\Gamma$.
\end{definition}

The results of Section~\ref{s1} involve the existence of ETGCs in finite connected simple $k$-regular graphs $\Gamma$ with girth $k +1$. They also involve the existence  of cubic graphs $\Gamma$ of corresponding EGCs on the prisms $\Gamma\square K_2$, for which we need the following Definition~\ref{ortho}.% of {\it orthogonal} ETCs. 

\begin{definition}\label{ortho}
Given a connected simple graph $\Gamma$ and a TC $C$ of $\Gamma$, if $C$ can be extended to two ETCs $C'$ and $C''$ differing in color on every edge of $\Gamma$, then $C'$ and $C''$ are said to be {\it orthogonal} ETCs and their induced edge colorings are also said to be {\it orthogonal}.
\end{definition}

Initially, two cases of finite connected simple cubic graphs $\Gamma$ of girth 4 are considered, namely planar and toroidal graphs, in Subsections~\ref{planar} 
 and~\ref{toroidal}, respectively. 
 Subsection~\ref{alg} deals with algorithmic aspects of the construction of ETCs. 
 Subsection~\ref{hand} extends the results that far obtained to surfaces of larger genera than those of the sphere and the torus, 
 leaving as Conjecture~\ref{con1} that all existing ETGCs on finite connected simple cubic graphs of girth 4 are obtained from the 3-cube (as in Theorem~\ref{t1}) by applying four constructive operations, namely: {\it periodic extensions} (see Definitions~\ref{pe} and~\ref{pets}), {\it accordion unfoldings} (see Definitions~\ref{unfold} and~\ref{reyes}), {\it cycle exchanges} (see Remark~\ref{remark}) and {\it ETCing} (see Definition~\ref{sabado}). To define properly such operations, planar and toroidal graphs $\Gamma$ are presented via {\it cutouts} and {\it normal cutouts} respectively in Definitions~\ref{cutout} and ~\ref{normal}. To recover $\Gamma$ from such cutouts, identifications of the resulting opposite sides in such cutouts must be performed. The faces of such cutouts (seen as plane graphs) are delimited by cycles of certain lengths $\ell$ that we denominate $\ell$-{\it belts} in Definition~\ref{belt}.
 
\begin{definition}\label{belt}
An $\ell$-{\it belt} in a plane graph $\Gamma$ is a cycle bounding a face of $\Gamma$ of length $\ell$.
\end{definition}
 
Definition~\ref{belt} is to be used in the four mentioned constructive operations in the developments of 
 Theorems~\ref{t1} and~\ref{starting} of Subsection~\ref{planar} on planar graphs, Theorems~\ref{16vertex} and~\ref{sonidos} of Subsection~\ref{toroidal} on toroidal graphs, Theorem~\ref{tt} of Subsection~\ref{alg}, that uses algorithmic aspects of ETGCs, and Theorem~{\ref{tu} of Subsection~\ref{hand}, on raising the genus of $\Gamma$ via handles.

\begin{remark}\label{alu} In contrast to the case of cubic graphs treated in Subsections~\ref{planar}--\ref{chance}, Subsection~\ref{7} shows that the unique $[4,5]$-cage, namely the 19-vertex Robertson graph~\cite{Rob}, contains TCs that are nonefficient, as do both the union $Pet^k$ of a (Hamilton) $10k$-cycle with $k$ pentagon and $k$-pentagram $5$-cycles disposed alternatively ($k>1$ not divisible by 5,
 also obtained by joining $k$ copies of the Petersen graph $Pet$ via a 1-factor, though actually $Pet^k$                                                                                                                                                                                                                                            contains $2k$ copies of $Pet$), 
  and their double covers $Dod^k$ based on $k$ copies of the dodecahedral graph $Dod$. 
  \end{remark}

\section{Regular graphs of girth = degree + 1}\label{s1} 

\begin{question}\label{q1} Let $\Gamma$ be a graph of regular degree $k\ge 3$ and girth $k+1$.  Does $\Gamma$ possess an ETGC with $k+1$ colors?\end{question} 

\begin{theorem}\label{fo}
Let $\Gamma$ be a finite connected plane simple cubic graph of girth 4. If $\Gamma$ has an ETC with four colors, then $|V(\Gamma)|\equiv 0 \mod 4$ and $\Gamma$ has only $\ell$-belts with $\ell\equiv 0 \mod 4$. 
\end{theorem}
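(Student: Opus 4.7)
The first claim follows from the efficient domination condition alone. For each color $c\in[4]$, $S_c$ is independent, so each $u\in S_c$ has all three of its neighbors in $V(\Gamma)\setminus S_c$; the EDS property then says each $v\in V(\Gamma)\setminus S_c$ has exactly one neighbor in $S_c$. Double-counting the edges between $S_c$ and its complement gives $3|S_c|=|V(\Gamma)|-|S_c|$, whence $|S_c|=|V(\Gamma)|/4$ and $4\mid|V(\Gamma)|$.

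\paragraph{Part 2, setup and local structure.} Fix a face $F$ bounded by a cycle $C=v_1v_2\cdots v_\ell$ with cyclic indices; write $a_i=c(v_i)$, $b_i=c(v_iv_{i+1})$, and let $w_i$ be the third (``spoke'') neighbor of $v_i$, with colors $\hat a_i=c(w_i)$ and $\hat b_i=c(v_iw_i)$. The TC condition at $v_i$ forces $\{a_{i-1},a_i,a_{i+1},\hat a_i\}=[4]$, so $a_{i-1},a_i,a_{i+1}$ are pairwise distinct and $\hat a_i$ is the unique fourth color; likewise the three edge colors at $v_i$ satisfy $\{b_{i-1},b_i,\hat b_i\}=[4]\setminus\{a_i\}$. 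Imposing additionally $\hat b_i\neq\hat a_i$ (the spoke edge is incident to $w_i$) reduces the admissible ordered triples $(b_{i-1},b_i,\hat b_i)$ at $v_i$ to exactly two possibilities, \emph{Type A} $=(a_{i+1},\hat a_i,a_{i-1})$ and \emph{Type B} $=(\hat a_i,a_{i-1},a_{i+1})$. Equating the two expressions for the shared edge color $b_i$ that arise from $v_i$ and $v_{i+1}$ then yields a deterministic propagation of the Type along $C$: the Types at $v_i$ and $v_{i+1}$ coincide (``Stay'') iff $a_{i-1}\neq a_{i+2}$, and differ (``Switch'') iff $a_{i-1}=a_{i+2}$.

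\paragraph{Part 2, mod-$4$ invariant.} To obtain $\ell\equiv 0\pmod 4$ I will attach to each vertex $v_i$ a $\mathbb Z/4$-valued label $\Phi_i$ that combines (a)~the Type bit at $v_i$ (contributing $0$ or $2$ in $\mathbb Z/4$) with (b)~a rotation index in $\mathbb Z/4$ recording how the planar ccw arrangement of edges at $v_i$ positions the triple $(b_{i-1},b_i,\hat b_i)$ relative to the canonical cyclic action $0\mapsto 1\mapsto 2\mapsto 3\mapsto 0$ on $[4]$. Using the explicit Type formulas and the Stay/Switch propagation rule together with the four-fold cyclic symmetry of $[4]$, a case-by-case check over the four local possibilities (Type at $v_i$ times Stay/Switch at $v_i\to v_{i+1}$) will verify that every increment $\Phi_{i+1}-\Phi_i$ is congruent to $1\pmod 4$. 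Cyclic closure $\sum_{i=1}^\ell(\Phi_{i+1}-\Phi_i)\equiv 0\pmod 4$ then forces $\ell\equiv 0\pmod 4$, and since $F$ was arbitrary every belt satisfies the same congruence.

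\paragraph{Main obstacle.} The hard part will be engineering $\Phi$ so that the planar orientation, the Type dichotomy, and the cyclic action on $[4]$ interact cleanly enough to make each of the four local cases contribute the same residue $+1\pmod 4$. Naive candidates that use only the vertex sequence $(a_i)$, such as $a_{i+1}-a_i$ in $\mathbb Z/4$, telescope to $0$ and therefore detect nothing about $\ell$ modulo $4$; $\Phi$ must couple the embedding-dependent ccw data at $v_i$ with the Type bit in a way whose increment under both Stay and Switch transitions is a uniform nonzero shift in $\mathbb Z/4$.
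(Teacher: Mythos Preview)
Your Part 1 is correct, and your Part 2 setup is both correct and nicely organized: the derangement argument giving exactly the two Types A/B, and the Stay/Switch propagation rule governed by $[a_{i-1}=a_{i+2}]$, are valid.

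The gap is that you never construct $\Phi$; you explicitly flag this as ``the hard part'' and leave it undone. This is not a routine omission, because \emph{no such $\Phi$ exists} using only the local data you describe. Take the $6$-cycle with $a=(0,1,2,0,1,2)$. All six $s_i$ equal $1$ (every step is a Switch), so the Type sequence $(A,B,A,B,A,B)$ closes up consistently. With that Type sequence one obtains $b=(3,0,3,2,3,1)$, spoke vertex colors $\hat a_i=3$ for all $i$, and spoke edge colors $\hat b=(2,2,1,1,0,0)$; every constraint you have written down (including $\hat b_i\neq\hat a_i$) is satisfied. If a $\Phi$ with uniform increment $+1\pmod 4$ existed, summing around this cycle would force $6\equiv 0\pmod 4$. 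So the strategy fails. Note also that the ``planar ccw arrangement of edges at $v_i$'' is \emph{the same} at every vertex of a facial cycle---that is precisely what makes it a face---so it cannot supply the differential information you hope for; and coupling to the specific cyclic order $0\to1\to2\to3$ on $[4]$ is non-canonical under color relabeling.

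What the theorem is really about is \emph{faces}, not arbitrary cycles, and the distinction matters: the $6$-cycle above does occur in an ETCed cubic graph (indeed in $Q_3$, with $w\sim v_1,v_3,v_5$ and $w'\sim v_2,v_4,v_6$), but never as a face boundary, because that would put all six color-$3$ spokes on one side and force global obstructions your local boundary analysis cannot see. The paper's own proof is a single sentence (``an $\ell$-belt with $\ell\not\equiv 0\pmod 4$ necessarily contains two vertices of a common color at distance less than $3$'') and does not spell out how the facial/planar hypothesis enters either; but your proposal, as it stands, is an incomplete argument whose intended completion is provably impossible along the lines sketched.
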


\begin{proof}
An $\ell$-belt with $\ell\not\equiv 0 \mod 4$ necessarily contains two vertices of a common color at a distance less than 3.
Therefore, $\Gamma$ cannot have an ETC with four colors. 
\end{proof}

\subsection{Planar cases}\label{planar}

\begin{definition}\label{cutout}
A {\it cutout} of a connected simple planar graph $\Gamma$ is a closed rectangle $\Phi=[0,x]\times[0,y]$ of $\mathbb{R}^2$ with $V(\Gamma)\subset\Phi\cap\mathbb{Q}^2$ and $E(\Gamma)$ given by vertical and horizontal segments of unit or unit-fraction lengths such that the identification of segments 
$\{0\}\times[0,y]\equiv\{x\}\times[0,y]$, 
(or $[0,x]\times\{0\}\equiv[0,x]\times\{y\}$),
yields a representation of $\Gamma$ with 
$(0,y)\equiv(x,y)$, (or $(0,0)\equiv(0,y)$), respectively $(0,0)\equiv(x,0)$, (or $(x,0)\equiv(x,y)$), 
representing a sole top, (or left), respectively bottom, (or right), vertex of $\Gamma$. 
\end{definition}

The identification in Definition~\ref{cutout} yields a  plane representation of $\Gamma$ with belts delimiting faces equivalent to those shown in the cutout itself or their continuation after a  periodic extension as in Definition~\ref{pe}.

 \begin{definition}\label{pe} A {\it periodic extension} of a cutout $\Phi$ of a connected simple planar graph $\Gamma$ is a cutout formed by the union of two or more copies $\Phi_1,\Phi_2,\ldots,\Phi_n$ of $\Phi$ ($1\le n\in\mathbb{Z}$) along the common linear vertical (or horizontal) borders between each $\Phi_i$ and $\Phi_{i+1}$ ($1\le i<n$), that is by identifying the right (or top) border of $\Phi_i$ and the left (or bottom) border of $\Phi_{i+1}$.
 \end{definition}

\begin{theorem}\label{t1}
The binary 3-cube graph $Q_3$ and the prisms $C_{4j}\square K_2$ (where $1<j\in\mathbb{Z}$) have ETGCs via color set $[4]$ in two mutually orthogonal ways. Moreover, any two resulting orthogonal ETGCs guarantee an EGCs in the corresponding 4-cube graph $Q_4=Q_3\square K_2$ or prism $(C_{4j}\square K_2)\square K_2$.
\end{theorem}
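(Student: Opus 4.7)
The plan is to handle $Q_3\cong C_4\square K_2$ and the prisms $C_{4j}\square K_2$ uniformly, labelling the vertex set as $\{a_i,b_i:0\le i<4j\}$ with rim edges $a_ia_{i+1}$ and $b_ib_{i+1}$ (indices mod $4j$) and rung edges $a_ib_i$. For the underlying vertex 4-coloring I would take $c(a_i)=i\bmod 4$ and $c(b_i)=(i+2)\bmod 4$. This is visibly a proper coloring, each color class $S_r$ has size $2j$, and to confirm that $S_r$ is an EDS I would check, by a finite case analysis on the residue of $i$ mod $4$, that every $a_i$ or $b_i$ lying outside $S_r$ has exactly one neighbor inside $S_r$. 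Rotational symmetry then yields the EDS property of all four classes simultaneously, so that part (b) of Definition~\ref{ahora} is in place.

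Once the vertex colors are fixed, the TC constraint pins the three incident edge colors at each vertex to the three complementary colors, and a short enumeration of which colors are simultaneously admissible at both endpoints of an edge yields $c(a_ia_{i+1})\in\{i+2,i+3\}$, $c(a_ib_i)\in\{i+1,i+3\}$ and $c(b_ib_{i+1})\in\{i,i+1\}$ (all mod $4$). The key step is a rigidity argument: since the color $i+2$ does not appear in the options for the rung $a_ib_i$, it must be taken by one of the two rim edges at $a_i$, and this binary choice then cascades uniquely through the remaining two edges at $a_i$. Propagating along the $a$-rim and then checking compatibility at each $b$-vertex rules out hybrids, leaving exactly the two global ETCs
\[
\text{Case A: } c(a_ia_{i+1})=i+2,\ c(a_ib_i)=i+3,\ c(b_ib_{i+1})=i+1,
\]
\[
\text{Case B: } c(a_ia_{i+1})=i+3,\ c(a_ib_i)=i+1,\ c(b_ib_{i+1})=i.
\]
The VEGC condition is then verified face by face: around the 4-cycle $a_ia_{i+1}b_{i+1}b_i$ the four edge colors exhaust $[4]$ in either case. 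Since the two colorings differ on every coordinate, they are orthogonal ETGCs, as required by the first assertion.

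For the concluding EGC on $\Gamma\square K_2$, view this graph as two twin copies $\Gamma',\Gamma''$ of $\Gamma$ joined by rungs $vv^*$; color the edges of $\Gamma'$ by Case A, the edges of $\Gamma''$ by Case B, and assign each rung $vv^*$ the common vertex color $c(v)$. Properness at $v$ is automatic because the three $\Gamma'$-edges at $v$ already use the three colors in $[4]\setminus\{c(v)\}$, so the rung color $c(v)$ completes four distinct colors there, and symmetrically at $v^*$. Every 4-cycle of $\Gamma\square K_2$ is either a 4-cycle of one copy of $\Gamma$ (hence rainbow by the VEGC above) or a prism 4-cycle $uvv^*u^*$; in the latter, the two rungs carry the distinct colors $c(u),c(v)$, while orthogonality forces the horizontal edges $uv$ and $u^*v^*$ to carry the two remaining colors of $[4]\setminus\{c(u),c(v)\}$, making the cycle rainbow.

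The hard part will be the rigidity argument of the middle paragraph: one must rule out any ETC that mixes the Case A and Case B local choices at different indices. The clean way to do this is to track the color of a single anchor edge, say $a_0b_0$, and follow its forced consequences first along the $a$-rim, then across the rungs into the $b$-rim, and then back to the $a$-rim, verifying that only a single consistent value survives at each step; once this is established, everything else reduces to modular-arithmetic bookkeeping plus the one-line orthogonality check for the prism 4-cycles.
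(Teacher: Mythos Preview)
Your plan is correct and in fact carries out the theorem more systematically than the paper does. The underlying two colorings are the same as the paper's: your Case~B is exactly the left cutout in display~(\ref{oct}) and Case~A is the right one, with your vertex labeling $c(a_i)=i$, $c(b_i)=i+2$ matching the paper's diagrams. The difference is one of presentation: the paper exhibits the two ETGCs pictorially via cutouts and then obtains the prisms $C_{4j}\square K_2$ by ``periodic extension'' (gluing $j$ copies of the $Q_3$ cutout side by side), whereas you write everything as closed formulas in $i\bmod 4$ and verify the ETC, EDS and VEGC conditions directly. For the EGC on $\Gamma\square K_2$ the paper argues only the $Q_4$ case, and does so by an antipodal/complementation trick: it labels the outer $Q_3$ by $(\overline{v_0},\overline{v_1},\overline{v_2},1)$ so that the single ETGC, read through the antipodal automorphism, secretly becomes the orthogonal ETGC on the outer copy. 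Your argument makes this explicit---one copy carries Case~A, the other Case~B, rungs carry the vertex color---and then the orthogonality together with the TC condition forces each prism $4$-cycle to be rainbow. Your version has the advantage of handling $Q_4$ and all the double prisms $(C_{4j}\square K_2)\square K_2$ uniformly, with no separate appeal to the antipodal map.

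Two small points. First, your VEGC verification names only the square faces $a_ia_{i+1}b_{i+1}b_i$; when $j=1$ the two rim $4$-cycles $a_0a_1a_2a_3$ and $b_0b_1b_2b_3$ are also girth cycles and you should note that their edge colors $(i+2)_{i=0}^{3}$ (Case~A) or $(i+3)_{i=0}^{3}$ (Case~B) exhaust $[4]$ as well. Second, the rigidity paragraph---showing that the local choice at $a_0b_0$ propagates uniquely around the $a$-rim and then forces the $b$-rim---is correct and pleasant, but strictly speaking it is not required by the theorem, which only asserts the \emph{existence} of two orthogonal ETGCs, not their uniqueness over the fixed vertex coloring. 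You may keep it as a remark, but there is no ``hard part'' left to worry about for the theorem itself.
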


\begin{proof}
The 3-cube $Q_3$ can be represented as an ETGC in two different ways via the two shown cutouts in display (\ref{oct}), with $Q_3$ recoverable by gluing in parallel the leftmost and rightmost vertical segments in the cutouts; these are separated by the inequality symbol $\neq$.
 \begin{eqnarray}\label{oct}\begin{array}{lll}
0\hspace*{2.2mm} _-^3\hspace*{2.2mm} 1\hspace*{2.2mm} _-^0\hspace*{2.2mm} 2\hspace*{2.2mm} _-^1\hspace*{2.2mm} 3\hspace*{2.2mm} _-^2\hspace*{2.2mm} 0&&
0\hspace*{2.2mm} _-^2\hspace*{2.2mm} 1\hspace*{2.2mm} _-^3\hspace*{2.2mm} 2\hspace*{2.2mm} _-^0\hspace*{2.2mm} 3\hspace*{2.2mm} _-^1\hspace*{2.2mm} 0\\

\!_1|\hspace*{6mm}_2|\hspace*{6mm}_3|\hspace*{6mm}_0|\hspace*{6mm}_1|&\neq&
\!_3|\hspace*{6mm}_0|\hspace*{6mm}_1|\hspace*{6mm}_2|\hspace*{6mm}_3|\\

2\hspace*{2.2mm} _0^-\hspace*{2.2mm}3\hspace*{2.2mm} _1^-\hspace*{2.2mm} 0\hspace*{2.2mm} _2^-\hspace*{2.2mm} 1\hspace*{2.2mm} _3^-\hspace*{2.2mm} 2&&
2\hspace*{2.2mm} _1^-\hspace*{2.2mm}3\hspace*{2.2mm} _2^-\hspace*{2.2mm} 0\hspace*{2.2mm} _3^-\hspace*{2.2mm} 1\hspace*{2.2mm} _0^-\hspace*{2.2mm} 2 \\
\end{array}\end{eqnarray}
The vertices and edges in either cutout are given via large and small (color) numbers, respectively, with the small numbers accompanying horizontal and vertical segments indicating the edges. The vertex colors are similar in both cutouts but the edge colors differ, so these edge colorings are orthogonal. Antipodal vertices of $Q_3$, differing 3 in distance, receive the same color number in either TC, so they are (orthogonal) ETCs. Moreover, by adding an edge between each pair of antipodal vertices $(v_0,v_1,v_2),(\ul{v_0},\ul{v_1},\ul{v_2})$, (with $v_i\in\{0,1\}$ and $\ul{v_i}=$ binary complement of $v_i$, for $i=0,1,2$), a copy of the complete bipartite graph $K_{4,4}$ is obtained. By assigning the common color of the vertices of each such pair to the corresponding added edge, a proper edge coloring of $K_{4,4}$ is obtained, not an EGC because of the presence of 4-cycles using just two colors on alternate edges. 
The 4-cube $Q_4$ acquires an EGC as a lifting of such proper edge coloring of $K_{4,4}$ via the canonical projection $Q_4\rightarrow K_{4,4}$, as illustrated for example in the center and right of Fig. 4 in \cite{DDD}, with all the 2-color 4-cycles of $K_{4,4}$ lifting to corresponding 2-color 8-cycles in $Q_4$. Another way to visualize such EGC on $Q_4$ is to consider two parallel concentric copies of $Q_3$, one larger than the other, with the vertices $(v_0,v_1,v_2)$ of $Q_3$ represented as $(v_0,v_1,v_2,0)$ in the inner 3-cube $Q_3'$ and as $(\ul{v_0},\ul{v_1},\ul{v_2},1)$ in the outer 3-cube $Q_3''$. The claimed EGC in $Q_4$ is obtained by lifting the edge colors of $Q_3$ as edge colors of $Q_4$ for the edges with a common last coordinate in $\{0,1\}$ and by assigning the common color of its end-vertices to each other edge, which differ in their last coordinate, forming a 1-factor between $Q_3'$ and $Q_3''$. 
 
Now if we glue successively a finite number of copies of say the leftmost cutout in display (\ref{oct}) and identify in parallel the first and last edges of the resulting graph, 
that is apply Definition~\ref{pe} of periodic extension,
a prism $C_{4j}\square K_2$ is obtained with an ETGC, by iterated continuation of the numerical color pattern. This is exemplified in display (\ref{oct2}) by concatenation of two colored cutouts as in the left of display (\ref{oct}) to produce an ETGC in the polygonal prism $C_8\square K_2$ to the right of the periodic-extension indication $(^{\times n}_\rightarrow)$.
Iteration of the continuation in display (\ref{oct2}) indicated by the operation $(_\rightarrow^{\times n})$ yields the statement of the theorem.

\begin{eqnarray}\label{oct2}\begin{array}{lll}
0\hspace*{2.2mm} _-^3\hspace*{2.2mm} 1\hspace*{2.2mm} _-^0\hspace*{2.2mm} 2\hspace*{2.2mm} _-^1\hspace*{2.2mm} 3\hspace*{2.2mm} _-^2\hspace*{2.2mm} 0 &&
0\hspace*{2.2mm} _-^3\hspace*{2.2mm} 1\hspace*{2.2mm} _-^0\hspace*{2.2mm} 2\hspace*{2.2mm} _-^1\hspace*{2.2mm} 3\hspace*{2.2mm} _-^2\hspace*{2.2mm}  0\hspace*{2.2mm} _-^3\hspace*{2.2mm} 1\hspace*{2.2mm} _-^0\hspace*{2.2mm} 2\hspace*{2.2mm} _-^1\hspace*{2.2mm} 3\hspace*{2.2mm} _-^2\hspace*{2.2mm} 0 \\
\!_1|\hspace*{6mm}_2|\hspace*{6mm}_3|\hspace*{6mm}_0|\hspace*{6mm}_1|&(_\rightarrow^{\times n})&
\!_1|\hspace*{6mm}_2|\hspace*{6mm}_3|\hspace*{6mm}_0|\hspace*{6mm}_1|\hspace*{6mm}_2|\hspace*{6mm}_3|\hspace*{6mm}_0|\hspace*{6mm}_1|\\
2\hspace*{2.2mm} _0^-\hspace*{2.2mm}3\hspace*{2.2mm} _1^-\hspace*{2.2mm} 0\hspace*{2.2mm} _2^-\hspace*{2.2mm} 1\hspace*{2.2mm} _3^-\hspace*{2.2mm} 2 &&
2\hspace*{2.2mm} ^-_0\hspace*{2.2mm}3\hspace*{2.2mm} ^-_1\hspace*{2.2mm} 0\hspace*{2.2mm} ^-_2\hspace*{2.2mm} 1\hspace*{2.2mm} ^-_3\hspace*{2.2mm} 2\hspace*{2.2mm} ^-_0\hspace*{2.2mm}3\hspace*{2.2mm} ^-_1\hspace*{2.2mm} 0\hspace*{2.2mm} ^-_2\hspace*{2.2mm} 1\hspace*{2.2mm} ^-_3\hspace*{2.2mm} 2 \\
\end{array}\end{eqnarray}
\end{proof} 

\begin{remark} In the case of the proof of Theorem~\ref{t1} for $Q_3$, the partition in Definition~\ref{ahora}(b) of ETGC in Section~\ref{s0} is composed by the colored subsets:
\begin{eqnarray}\label{eqn0}S_0=\{000,111\},\; S_1=\{100,011\},\; S_2=\{010,101\},\; S_3=\{001,110\},\end{eqnarray} that is the pairs of opposite, or antipodal, or complementary, vertices in color numbers 0,1,2,3, respectively. In addition, a 1-factor $F_i$ exists in each of the 6-cycles of $Q_3$ that form the complements $Q_3\setminus S_i$, ($i=0,1,2,3$), of the pairs $S_i$ in $Q_3$, with the color $i$ assignment adopted for the vertices of $S_i$ extended to the  edges of $F_i$. This takes care of all the edges of $Q_3$, as follows:   

\begin{eqnarray}\label{cuatro}\begin{array}{c}
F_0=\{(100,101),(010,110),(001,011)\}\subset E(Q_3)\setminus S_0\\
F_1=\{(000,010),(001,101),(110,111)\}\subset E(Q_3)\setminus S_1\\
F_2=\{(000,001),(100,110),(011,111)\}\subset E(Q_3)\setminus S_2\\
F_3=\{(000,100),(010,011),(101,111)\}\subset E(Q_3)\setminus S_3
\end{array}\end{eqnarray}

With respect to iterated continuation of the numerical color pattern in (\ref{oct}) for a prism $C_{4j}\square K_2$, where $1<j\in\mathbb{Z}$, the color cycle accompanying a cycle composed by those edges whose vertices are not colored 0 (forming an EDS $S_0$)
is of the form $$(020103)^j=(020103020103\cdots 020103),$$ where 020103 is concatenated $j$ times before closing a cycle. Then, the cycle $(C_{4j}\square K_2)\setminus S_0$ contains a 1-factor $F_0$ whose vertices are colored 0.
\end{remark}

\begin{remark} It is proved in \cite{Amanda} that the generalized Petersen graph $G(n,1)=C_n\square K_2$ for $n\ne 5$ has total chromatic number $\Delta +1$.  Such $G(n,1)$ has a TC, but it is not necessarily ETC or ETGC, as can be seen in the cutouts for $n=6,7$ in display (\ref{chet}).

\begin{eqnarray}\label{chet}\begin{array}{cccc}
0\hspace*{2.2mm} _-^1\hspace*{2.2mm} 2\hspace*{2.2mm} _-^0\hspace*{2.2mm} 1\hspace*{2.2mm} _-^2\hspace*{2.2mm} 0\hspace*{2.2mm} _-^1\hspace*{2.2mm} 2
\hspace*{2.2mm} _-^0\hspace*{2.2mm} 1\hspace*{2.2mm} _-^2\hspace*{2.2mm} 0
&&&
3\hspace*{2.2mm} _-^0\hspace*{2.2mm} 1\hspace*{2.2mm} _-^3\hspace*{2.2mm} 2\hspace*{2.2mm} _-^1\hspace*{2.2mm} 0\hspace*{2.2mm} _-^2\hspace*{2.2mm} 1
\hspace*{2.2mm}_-^0\hspace*{2.2mm} 2\hspace*{2.2mm} _-^1\hspace*{2.2mm} 0\hspace*{2.2mm} _-^2\hspace*{2.2mm} 3\\

\!_3|\hspace*{6mm}_3|\hspace*{6mm}_3|\hspace*{6mm}_3|\hspace*{6mm}_3|\hspace*{6mm}_3|\hspace*{6mm}_3|&&&
\!_1|\hspace*{6mm}_2|\hspace*{6mm}_0|\hspace*{6mm}_3|\hspace*{6mm}_3|\hspace*{6mm}_3|\hspace*{6mm}_3|\hspace*{6mm}_1|\\

1\hspace*{2.2mm} _2^-\hspace*{2.2mm}0\hspace*{2.2mm} _1^-\hspace*{2.2mm} 2\hspace*{2.2mm} _0^-\hspace*{2.2mm} 1\hspace*{2.2mm} _2^-\hspace*{2.2mm} 0\hspace*{2.2mm} _1^-\hspace*{2.2mm} 2\hspace*{2.2mm} _0^-\hspace*{2.2mm} 1
&&&
2\hspace*{2.2mm} _3^-\hspace*{2.2mm}0\hspace*{2.2mm} _1^-\hspace*{2.2mm} 3\hspace*{2.2mm} _2^-\hspace*{2.2mm} 1\hspace*{2.2mm} _0^-\hspace*{2.2mm} 2
\hspace*{2.2mm} _1^-\hspace*{2.2mm} 0\hspace*{2.2mm} _2^-\hspace*{2.2mm} 1\hspace*{2.2mm} _0^-\hspace*{2.2mm} 2 
\\
\end{array}\end{eqnarray}
Of the seven girth 4-cycles of $P(7,1)$, only the three on the left of its cutout (on the right of (\ref{chet})) have their vertex sets and edge sets in bijective correspondence with the color set $[4]=\{0,1,2,,3\}$. (Compare for example with Theorem~\ref{Gamma} below, where just 14 of the 54 girth 5-cycles have their vertex set and edge set in bijective correspondence with $[5]$). 
\end{remark}

The periodic extensions of cutouts of Definition~\ref{pe} generalize as in the following Definition~\ref{unfold}, to be useful in Theorem~\ref{starting} and Examples~\ref{ex1}--\ref{ex3}.

\begin{definition}\label{unfold}
Given a cutout $\Phi$ of a connected simple planar cubic graph $\Gamma$ of girth 4, an ({\it accordion}) {\it unfolding} of $\Phi$ is a cutout $\Phi'$ of a connected planar cubic graph $\Gamma'$ obtained by the replacement of 4-belts of the form $P_2\square P_2$ by copies of $P_2\square P_{2\ell}$, where $1<\ell\in\mathbb{Z}$.
\end{definition}

The unfoldings that are periodic extensions as in Definition~\ref{pe} are obtained by operating upon the belts of  a cutout of $Q_3$. Once we started with the belts of a cutout of.$Q_3$, much more can be obtained via successive unfoldings of the belts of intermediate graphs obtained in the process, as can be seen in the next Theorem~\ref{starting} and subsequent Examples~\ref{ex1}--\ref{ex3}.

\begin{theorem}\label{starting} Starting with the 4-belts of a cutout of $Q_3$,
successive unfoldings lead to an infinite family of planar graphs $\Gamma'$. Each such $\Gamma'$ lacking $\ell$-belts with $\ell\not\equiv 0 \mod 4$ in the corresponding cutout has ETGCs. Moreover, there is a VEGC in the corresponding prism.
\end{theorem}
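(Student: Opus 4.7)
The plan is to prove this by induction on the number of accordion unfoldings applied to the cutout, using the two orthogonal ETGCs of $Q_3$ produced in Theorem~\ref{t1} as the base case. Both colorings in display~(\ref{oct}) exhibit period $4$ along each row of the cutout for vertices, for horizontal edges, and for vertical edges; this periodicity is the structural fact that will be propagated by the induction. Since each unfolding with $\ell>1$ strictly enlarges the graph, the family produced is infinite, which takes care of the existence assertion.

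For the inductive step, consider a single unfolding that replaces a $P_2\square P_2$ belt by a copy of $P_2\square P_{2\ell}$. In the cutout this amounts to the insertion of $2\ell-2$ new columns between two consecutive columns of the previous cutout. The two belts immediately above and below the unfolded belt each have their shared side extended from $1$ to $2\ell-1$ edges, so each of them gains exactly $2\ell-2$ edges in length, while all other belts, as well as the $2\ell-1$ newly created $4$-belts inside the insertion, remain $\equiv 0\pmod 4$. The hypothesis of the theorem, together with Theorem~\ref{fo}, therefore forces $2\ell-2\equiv 0\pmod 4$, that is, $\ell$ odd. Crucially, this is the very same arithmetic condition under which the period-$4$ vertex and edge color patterns extend unchanged across the inserted block, so the unfolded cutout inherits the ETGC from its precursor by plain periodic continuation; the same propagation applied to the second coloring in (\ref{oct}) produces a second ETGC orthogonal to the first.

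Verifying that the extended coloring is genuinely an ETGC is then routine: properness and girth-cycle totality hold automatically at every $4$-cycle of the inserted block by the periodicity of the pattern, and the efficient-domination property for each colored class $S_i$ is preserved because the vertices carrying color $i$ remain spaced at horizontal distance $4$ along both rows and each non-class vertex still has exactly one neighbor in $S_i$. For the VEGC on $\Gamma'\square K_2$, I would lift the two orthogonal ETGCs of $\Gamma'$ to the prism exactly as in the proof of Theorem~\ref{t1}: color the edges of the two copies of $\Gamma'$ by the two orthogonal edge colorings $e$ and $e'$, assign each rung edge $(v,0)(v,1)$ the common vertex color $c(v)$, and carry over the vertex colors from the two copies so that each newly created cross $4$-cycle is rainbow, which is possible precisely because $e(uv)\neq e'(uv)$ on every edge $uv$ of $\Gamma'$. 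I expect the main obstacle to be the bookkeeping at belts that straddle the boundary of the cutout under identification, to ensure that the closed-up graph never acquires a belt of length $\not\equiv 0\pmod 4$ inadvertently; once that global consistency check is completed, both the ETGC construction for $\Gamma'$ and the VEGC for $\Gamma'\square K_2$ follow from the periodic mechanism described above.
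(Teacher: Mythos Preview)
Your inductive scheme has a genuine gap. You deduce that each single unfolding must use odd $\ell$ by invoking the belt-length hypothesis \emph{at the intermediate stage}, but the theorem only imposes that hypothesis on the terminal graph $\Gamma'$. Sequences containing even-$\ell$ unfoldings are permitted, and they do reach graphs satisfying the hypothesis: the paper's own Example~\ref{ex1} builds the ETGCed cutout in the upper-left of display~(\ref{oct6}) by applying \emph{two} $\ell=2$ unfoldings (each the basic $P_2\square P_2\to P_2\square P_4$ step on the left of display~(\ref{octubre})); after only the first of these a $6$-belt is present, so no ETGC exists at that stage and your induction cannot pass through it. That $16$-vertex graph is not reachable from $Q_3$ by odd-$\ell$ unfoldings alone (a single $\ell=3$ unfolding, the only odd-$\ell$ move adding eight vertices, yields a non-isomorphic graph), so your argument genuinely misses part of the family. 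A related symptom: describing an unfolding as the ``insertion of $2\ell-2$ new columns between two consecutive columns'' is accurate only for the two-row prism cutouts of Theorem~\ref{t1}; once earlier unfoldings have produced deeper cutouts, a further unfolding affects just one horizontal strip, and the picture of period-$4$ continuation across full columns no longer applies.

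The paper's proof proceeds differently. It takes the elementary step $P_2\square P_2\to P_2\square P_4$ (i.e.\ $\ell=2$) as the basic move and composes it to reach every $P_2\square P_{2\ell}$, exhibiting in displays~(\ref{octubre})--(\ref{noviembre}) the explicit four-color pattern on the unfolded block for several values of $\ell$ simultaneously. The argument does not try to maintain an ETGC through intermediate stages; rather it shows that this local periodic pattern assembles into a global ETGC on the final $\Gamma'$ precisely when every belt of the cutout has length $\equiv 0\pmod 4$. The VEGC on the prism then follows, as in your last paragraph, from the pair of orthogonal ETGCs produced in Theorem~\ref{t1}.
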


\begin{proof} Let $\Gamma$ be an ETCed graph as in the statement of Theorem~\ref{t1}. Let $X=(AcBdCaDb)$ be a colored 4-belt of $\Gamma$ as on the left of displays (\ref{octubre}) and (\ref{noviembre}). Consider the graph obtained as the union of the 4-belts
$X$, $Y=(DaCbAdBc)$ and $Z=(BdAcDbCa)$ with successive intersections, (colored copies of $K_2=P_2$), $X\cap Y=DaC$ and $Y\cap Z=BdA$. Then, the graph $\Gamma'$ obtained by replacing the sides $AbD$ and $BdC$ of $\Gamma$ by the paths $AbDcBaC$ and $BdCbAcD$, respectively, with the addition of edges to include $X$, $Y$ and $Z$, namely edges $DbA$, $BdA$ and $CbD$, is in the infinite family mentioned in the statement. 
This transformation $P_2\square P_2\rightarrow P_2\square P_4$ is illustrated in displays (\ref{octubre}) and (\ref{noviembre}), further extended to respective transformations $P_2\square P_2\rightarrow P_2\square P_6$ and $P_2\square P_2\rightarrow P_2\square P_8$, via composition of extensions. 

\begin{eqnarray}\label{octubre}\begin{array}{lllll}
A\hspace*{2.2mm} _-^b\hspace*{2.2mm} D&&A\hspace*{2.2mm} _-^b\hspace*{2.2mm} D\hspace*{2.2mm} _-^c\hspace*{2.2mm} B\hspace*{2.2mm} _-^a\hspace*{2.2mm} C\hspace*{2.2mm}&&A\hspace*{2.2mm} _-^b\hspace*{2.2mm} D\hspace*{2.2mm} _-^c\hspace*{2.2mm} B\hspace*{2.2mm} _-^a\hspace*{2.2mm} C\hspace*{2.2mm} _-^d\hspace*{2.2mm}  A\hspace*{2.2mm} _-^b\hspace*{2.2mm} D\\
\!_c|\hspace*{7.5mm}_a|&\rightarrow&\!_c|\hspace*{7.5mm}_a|\hspace*{7.5mm}_d|\hspace*{7.5mm}_b|&\rightarrow&\!_c|\hspace*{7.5mm}_a|\hspace*{7.5mm}_d|\hspace*{7.5mm}_b|\hspace*{7.5mm}_c|\hspace*{7.5mm}_a|\\
B\hspace*{2.2mm} ^-_d\hspace*{2.2mm}C&&B\hspace*{2.2mm} ^-_d\hspace*{2.2mm}C\hspace*{2.2mm} ^-_b\hspace*{2.2mm} A\hspace*{2.2mm} ^-_c\hspace*{2.2mm} D&&B\hspace*{2.2mm} ^-_d\hspace*{2.2mm}C\hspace*{2.2mm} ^-_b\hspace*{2.2mm} A\hspace*{2.2mm} ^-_c\hspace*{2.2mm} D\hspace*{2.2mm} ^-_a\hspace*{2.2mm} B\hspace*{2.2mm} ^-_d\hspace*{2.2mm}C\\
\end{array}\end{eqnarray}

\begin{eqnarray}\label{noviembre}\begin{array}{lllll}
A\hspace*{2.2mm} _-^b\hspace*{2.2mm} D&&A\hspace*{2.2mm} _-^b\hspace*{2.2mm} D\hspace*{2.2mm} _-^c\hspace*{2.2mm} B\hspace*{2.2mm} _-^a\hspace*{2.2mm} C\hspace*{2.2mm}&&A\hspace*{2.2mm} _-^b\hspace*{2.2mm} D\hspace*{2.2mm} _-^c\hspace*{2.2mm} B\hspace*{2.2mm} _-^a\hspace*{2.2mm} C\hspace*{2.2mm} _-^d\hspace*{2.2mm}  A\hspace*{2.2mm} _-^b\hspace*{2.2mm} D\hspace*{2.2mm} _-^c\hspace*{2.2mm} B\hspace*{2.2mm} _-^a\hspace*{2.2mm} C\hspace*{2.2mm}\\
\!_c|\hspace*{7.5mm}_a|&\rightarrow&\!_c|\hspace*{7.5mm}_a|\hspace*{7.5mm}_d|\hspace*{7.5mm}_b|&\rightarrow&\!_c|\hspace*{7.5mm}_a|\hspace*{7.5mm}_d|\hspace*{7.5mm}_b|\hspace*{7.5mm}_c|\hspace*{7.5mm}_a|\hspace*{7.5mm}_d|\hspace*{7.5mm}_b|\\
B\hspace*{2.2mm} ^-_d\hspace*{2.2mm}C&&B\hspace*{2.2mm} ^-_d\hspace*{2.2mm}C\hspace*{2.2mm} ^-_b\hspace*{2.2mm} A\hspace*{2.2mm} ^-_c\hspace*{2.2mm} D&&B\hspace*{2.2mm} ^-_d\hspace*{2.2mm}C\hspace*{2.2mm} ^-_b\hspace*{2.2mm} A\hspace*{2.2mm} ^-_c\hspace*{2.2mm} D\hspace*{2.2mm} ^-_a\hspace*{2.2mm} B\hspace*{2.2mm} ^-_d\hspace*{2.2mm}C\hspace*{2.2mm} ^-_b\hspace*{2.2mm} A\hspace*{2.2mm} ^-_c\hspace*{2.2mm} D\hspace*{2.2mm}\\
\end{array}\end{eqnarray}
Iteration of such extensions leads to all graphs $\Gamma'$ in the cited family. Only those graphs $\Gamma'$ lacking any $\ell$-belts with $\ell\not\equiv 0 \mod 4$  have ETGC.

All graphs $\Gamma'$ obtained via the modifications presented above are planar finite cubic graphs of girth 4.
The ETGCs obtained are the only possible TCs in planar cubic graphs of girth 4. The two orthogonal ETGCs of Theorem~\ref{t1} give place clearly to a VEGC in the corresponding prism. 
\end{proof}   

\begin{example}\label{ex1}

 We obtain the graphs with cutouts represented in the upper section of display (\ref{oct6}) with ETGCs as indicated, 
 resulting from two and four modifications as on the left sides of (\ref{octubre}) and~(\ref{noviembre}), respectively,
performed on the left and right sides of display (\ref{oct2}). Additionally, let us replace on the cutout $\Phi$ to the right of the indication $(^{\times n}_{\rightarrow})$ the two edges that are indicated $0\hspace*{2.2mm}^{..}_2\hspace*{2.2mm}1$ (via diaereses, or transpose colons, instead of $0\hspace*{2.2mm}^-_2\hspace*{2.2mm}1$) with two edges not in $\Phi$ but forming a 4-cycle with the four participating vertices, suggested to the right of $\Phi$ as an auxiliary square $K_2\square K_2$ (as in display (\ref{octaedro}) and Remark~\ref{remark}). This replacement yields a 
toroidal graph with an ETGC that was not produced in the context of Theorem~\ref{starting}, since the two new edges are not in $\Phi$, so a handle has to be added to $\Phi$, making the original planar graph, say $\Gamma$, into a toroidal graph $\Gamma'$, where $\Phi$ loses the two edges  $0\hspace*{2.2mm}^{..}_2\hspace*{2.2mm}1$, sensed by the presence of two resulting 6-cycles in $\Phi$ that can be taken as the two circular borders of the handle that carry the two new non-$\Phi$ edges.  

\begin{eqnarray}\label{oct6}\begin{array}{cccc|c}
0\hspace*{2.2mm} _-^3\hspace*{2.2mm} 1\hspace*{2.2mm} _-^0\hspace*{2.2mm} 2\hspace*{2.2mm} _-^1\hspace*{2.2mm} 3\hspace*{2.2mm} _-^2\hspace*{2.2mm} 0&&
0\hspace*{2.2mm} _-^3\hspace*{2.2mm} 1\hspace*{2.2mm} _-^0\hspace*{2.2mm} 2\hspace*{2.2mm} _-^1\hspace*{2.2mm} 3\hspace*{2.2mm} _-^2\hspace*{2.2mm}  0\hspace*{2.2mm} _-^3\hspace*{2.2mm} 1\hspace*{2.2mm} _-^0\hspace*{2.2mm} 2\hspace*{2.2mm} _-^1\hspace*{2.2mm} 3\hspace*{2.2mm} _-^2\hspace*{2.2mm} 0&& \\
\!_1|\hspace*{6mm}_2|\hspace*{6mm}_3|\hspace*{6mm}_0|\hspace*{6mm}_1|&&
\!_1|\hspace*{6mm}_2|\hspace*{6mm}_3|\hspace*{6mm}_0|\hspace*{6mm}_1|\hspace*{6mm}_2|\hspace*{6mm}_3|\hspace*{6mm}_0|\hspace*{6mm}_1|&&\\
2\hspace*{2.2mm} ^-_0\hspace*{2.2mm}3\hspace*{7mm} 0\hspace*{2.2mm} ^-_2\hspace*{2.2mm} 1\hspace*{7mm} 2&&
\,2\hspace*{2.2mm} ^-_0\hspace*{2.2mm}3\hspace*{7mm} 0\hspace*{2.2mm} ^{..}_2\hspace*{2.6mm} 1\hspace*{7mm} 2\hspace*{2.2mm} ^-_0\hspace*{2.2mm}3\hspace*{7mm} 0\hspace*{2.2mm} ^{..}_2\hspace*{2.6mm} 1\hspace*{7mm}2 &&0\hspace*{2.2mm}^{..}_2\hspace*{2.2mm}1\\
\!_3|\hspace*{6mm}_1|\hspace*{6mm}_1|\hspace*{6mm}_3|\hspace*{6mm}_3|&(^{\times n}_\rightarrow)&
\!_3|\hspace*{6mm}_1|\hspace*{6mm}_1|\hspace*{6mm}_3|\hspace*{6mm}_3|\hspace*{6mm}_1|\hspace*{6mm}_1|\hspace*{6mm}_3|\hspace*{6mm}_3|&&\,|_2\hspace{6mm}|_2\\
1\hspace*{2.2mm} ^-_2\hspace*{2.2mm}0\hspace*{7mm} 3\hspace*{2.2mm} ^-_0\hspace*{2.2mm} 2\hspace*{7mm} 1&&
1\hspace*{2.2mm} ^-_2\hspace*{2.2mm}0\hspace*{7mm} 3\hspace*{2.2mm} ^-_0\hspace*{2.2mm} 2\hspace*{7mm} 1\hspace*{2.2mm} ^-_2\hspace*{2.2mm}0\hspace*{7mm} 3\hspace*{2.2mm} ^-_0\hspace*{2.2mm} 2\hspace*{7mm}1 &&1\hspace*{2.2mm}^{..}_2\hspace*{2.2mm}0\\
\!_0|\hspace*{6mm}_3|\hspace*{6mm}_2|\hspace*{6mm}_1|\hspace*{6mm}_0|&&
\!_0|\hspace*{6mm}_3|\hspace*{6mm}_2|\hspace*{6mm}_1|\hspace*{6mm}_0|\hspace*{6mm}_3|\hspace*{6mm}_2|\hspace*{6mm}_1|\hspace*{6mm}_0|&&\\
3\hspace*{2.2mm} ^-_1\hspace*{2.2mm}2\hspace*{2.2mm} ^-_0\hspace*{2.2mm} 1\hspace*{2.2mm} ^-_3\hspace*{2.2mm} 0\hspace*{2.2mm} ^-_2\hspace*{2.2mm} 3 &&
3\hspace*{2.2mm} ^-_1\hspace*{2.2mm}2\hspace*{2.2mm} ^-_0\hspace*{2.2mm} 1\hspace*{2.2mm} ^-_3\hspace*{2.2mm} 0\hspace*{2.2mm} ^-_2\hspace*{2.2mm} 3\hspace*{2.2mm} ^-_1\hspace*{2.2mm}2\hspace*{2.2mm} ^-_0\hspace*{2.2mm} 1\hspace*{2.2mm} ^-_3\hspace*{2.2mm} 0\hspace*{2.2mm} ^-_2\hspace*{2.2mm} 3 &&\\
=&&\neq&&\\
0\hspace*{2.2mm} ^-_3\hspace*{2.2mm} 1\hspace*{2.2mm} ^-_0\hspace*{2.2mm}2\hspace*{2.2mm} ^-_1\hspace*{2.2mm}3\hspace*{2.2mm}^-_2\hspace*{2.2mm}0
&&3\hspace*{2.2mm} ^-_2\hspace*{2.2mm}1\hspace*{2.2mm}^-_3\hspace*{2.2mm}0\hspace*{2.2mm} ^-_1\hspace*{2.2mm}2\hspace*{2.2mm}^-_3\hspace*{2.2mm}1\hspace*{2.2mm}^-_0\hspace*{2.2mm} 3\hspace*{2.2mm}^-_1\hspace*{2.2mm} 2\hspace*{2.2mm}^-_3\hspace*{2.2mm}0\hspace*{2.2mm}^-_1\hspace*{2.2mm}3 &&\\
_1|\hspace*{6mm}_2|\hspace*{6mm}_3|\hspace*{6mm}_0|\hspace{6mm}_1|
&&_1|\hspace*{24mm}_1|\hspace*{6mm}_2|\hspace*{24mm}_2|\hspace*{6mm}_1|&&\\
2\hspace*{2.2mm} ^-_0\hspace*{2.2mm}3\hspace*{7mm}0\hspace*{2.2mm} ^-_2\hspace*{2.2mm} 1\hspace{6.5mm}2
&&2\hspace*{2.2mm} ^-_1\hspace*{2.2mm}0\hspace*{2.2mm} ^-_3\hspace*{2.2mm}1\hspace*{2.2mm}^-_2\hspace*{2.2mm}3\hspace*{7mm}0\hspace*{2.2mm}^-_3\hspace*{2.2mm}2\hspace*{2.2mm} ^-_1\hspace*{2.2mm} 3\hspace*{2.2mm} ^-_0\hspace*{2.2mm} 1\hspace*{7mm}2&&\\
_3|\hspace*{6mm}_1|\hspace*{6mm}_1|\hspace*{6mm}_3|\hspace*{6.5mm}_3|
&&_3|\hspace*{6mm}_2|\hspace*{6mm}_0|\hspace*{6mm}_1|\hspace*{6mm}_1|\hspace*{6mm}_0|\hspace*{6mm}_2| \hspace*{6mm}_3| \hspace*{6mm}_3|&&\\
1\hspace*{2.2mm} ^-_2\hspace*{2.2mm}0\hspace*{7mm}3\hspace*{2.2mm}^-_0\hspace*{2.2mm}2\hspace*{7mm}1
&(^{\mbox{tor}}_\rightarrow)&1\hspace*{2.2mm}^-_0\hspace*{2.2mm}3\hspace*{2.2mm}^-_1\hspace{2.2mm}2\hspace*{2.2mm}^-_3\hspace*{2.2mm}0\hspace*{7mm}3\hspace*{2.2mm}^-_2\hspace*{2.2mm}1\hspace*{2.2mm} ^-_3\hspace*{2.2mm}0\hspace*{2.2mm}^-_1\hspace*{2.2mm} 2\hspace*{7mm}1&&\\
_0|\hspace*{6mm}_3|\hspace*{6mm}_2|\hspace*{6mm}_1|\hspace*{7mm}_0|
&&_2|\hspace*{24mm}_2|\hspace*{6mm}_0|\hspace*{24mm}_0|\hspace*{6mm}_2|&&\\
3\hspace*{2.2mm}^-_1\hspace*{2.2mm}2\hspace*{2.2mm} ^-_0\hspace*{2.2mm} 1\hspace*{2.2mm} ^-_3\hspace*{2.2mm} 0\hspace*{2.2mm}^-_2\hspace*{2.2mm}3
&&0\hspace*{2.2mm} ^-_3\hspace*{2.2mm}2\hspace*{2.2mm}^-_1\hspace*{2.2mm} 3\hspace*{2.2mm} ^-_0\hspace*{2.2mm} 1\hspace*{2.2mm} ^-_3\hspace*{2.2mm}2\hspace*{2.2mm}^-_1\hspace{2.2mm}0\hspace*{2.2mm} ^-_3\hspace*{2.2mm} 1\hspace*{2.2mm} ^-_2\hspace*{2.2mm} 3\hspace*{2.2mm} ^-_1\hspace*{2.2mm} 0&&\\
:\hspace*{36mm}:
&&:\hspace*{6mm}_0|\hspace*{6mm}_2|\hspace*{23.5mm}_2|\hspace*{6mm}_0|\hspace*{16mm}:&&\\
0\hspace*{2.2mm}^-_3\hspace*{2.2mm}1\hspace*{2.2mm} ^-_0\hspace*{2.2mm} 2\hspace*{2.2mm} ^-_1\hspace*{2.2mm} 3\hspace*{2.2mm}^-_2\hspace*{2.2mm}0
&&3\hspace*{2.2mm} ^-_2\hspace*{2.2mm}1\hspace*{2.2mm}^-_3\hspace*{2.2mm}0\hspace*{2.2mm} ^-_1\hspace*{2.2mm}2\hspace*{2.2mm}^-_3\hspace*{2.2mm}1\hspace*{2.2mm}^-_0\hspace*{2.2mm} 3\hspace*{2.2mm}^-_1\hspace*{2.2mm} 2\hspace*{2.2mm}^-_3\hspace*{2.2mm}0\hspace*{2.2mm}^-_1\hspace*{2.2mm}3&&\\
\end{array}\end{eqnarray}

In the lower section of display (\ref{oct6}), the left cutout is the same as in the upper section but  with the replication of the top horizontal path $0_-^31_-^02_-^13_-^20$ a layer below the botton horizontal path $3_-^12_-^01_-^30_-^23$. This is done in order to add four vertical compensatory edges on the corresponding right cutout at the same layer. In particular, this version of a $(_\rightarrow^{\times n})$ operation is made into a transformation of a planar graph into a toroidal graph, denoted by the indication $(_\rightarrow^{\mbox{tor}})$, by means of the additional parallel identification of the top and bottom horizontal paths.   
Note that the initial modifications on the right cutout consist in replacing the middle 4-belts with vertex-color dispositions $^{23}_{10}$ and $^{01}_{32}$ by the copies of $P_4\square P_2$ with vertex-color dispositions $^{2013}_{1320}$ and $^{0231}_{3102}$. This forces the mentioned addition of four edges and the vertex- and edge-coloring changes shown in the rest of the cutout.
\end{example}

Notice that the operation $(_\rightarrow^{\times n})$ of display (\ref{oct2}) in the proof of Theorem~\ref{t1} can be expressed in terms of the successive unfoldings in the statement of Theorem~\ref{starting}, so that Theorem~\ref{t1} itself could be expressed in terms of such unfoldings.

\begin{example}\label{ex2} 

Using Example~\ref{ex1} iteratively, the sequence of ETCed graphs in display (\ref{octaedro}) is engendered, where a transpose form of Definition~\ref{cutout} is employed (top and bottom identified),
In the cutout on the right in (\ref{octaedro}), three horizontal segments (representing two edges) are replaced by diaereses or transpose colons in order to illustrate that replacement of a pair of edges $e$ and $f$ with the same endvertex and edge colors (in this case 0 and 1 for the endvertices and 2 for the edges) with a new pair of edges with similar colors and forming a 4-cycle with $e$ and $f$ yields a new $\Gamma'$ with an ETGC. In this case, the new edges do not need the presence of a handle, since the two new edges partition a belt of $\Phi$, and the deletion of $e$ and $f$ does not create any change-of genus problem, either.
In larger cases of such planar graphs $\Gamma$, such pairs of edges may be realizable only via handles, line was the case of the upper-right cutout in display (\ref{oct6}) commented in Example~\ref{ex1}, which may increase the genus of the containing surface, which for planar graphs is a sphere, that has genus 0. This happens in the latter part of Example~\ref{sondos}, in which a toroidal graph is transformed into a graph of genus 2. 

\begin{eqnarray}\label{octaedro}\begin{array}{lllllc|c}
1\hspace*{2.2mm} _-^2\hspace*{2.2mm} 0&&1\hspace*{2.2mm} _-^2\hspace*{2.2mm} 0\hspace*{2.2mm} _-^3\hspace*{2.2mm} 2\hspace*{2.2mm} _-^1\hspace*{2.2mm} 3&&
1\hspace*{2.2mm} _-^2\hspace*{2.2mm} 0\hspace*{2.2mm} _-^3\hspace*{2.2mm} 2\hspace*{2.2mm} _-^1\hspace*{2.2mm} 3\hspace*{2.2mm}_-^0\hspace*{2.2mm}1\hspace*{2.2mm}_{..}^2\hspace{2.2mm}0&&\\
\!_3|\hspace*{6mm}_1|&&\!_3|\hspace*{6mm}_1|\hspace*{6mm}_0|\hspace*{6mm}_2|&&
\!_3|\hspace*{6mm}_1|\hspace*{6mm}_0|\hspace*{6mm}_2|\hspace*{6mm}_3|\hspace*{6mm}_1|&&\\
2\hspace*{2.2mm} ^-_0\hspace*{2.2mm}3&&2\hspace*{2.2mm} ^-_0\hspace*{2.2mm}3\hspace*{7mm} 1\hspace*{2.2mm} ^-_3\hspace*{2.2mm} 0&&
2\hspace*{2.2mm} ^-_0\hspace*{2.2mm}3\hspace*{7mm} 1\hspace*{2.2mm} ^-_3\hspace*{2.2mm} 0\hspace{6.5mm}2\hspace*{2.2mm} ^-_0\hspace*{2.2mm}3&&\\
\!_1|\hspace*{6mm}_2|&&\!_1|\hspace*{6mm}_2|\hspace*{6mm}_2|\hspace*{6mm}_1|&&
\!_1|\hspace*{6mm}_2|\hspace*{6mm}_2|\hspace*{6mm}_1|\hspace*{6.5mm}_1|\hspace*{6mm}_2|&&\\
0\hspace*{2.2mm} ^-_3\hspace*{2.2mm}1&\rightarrow&0\hspace*{2.2mm} ^-_3\hspace*{2.2mm}1\hspace*{7mm} 3\hspace*{2.2mm} ^-_0\hspace*{2.2mm} 2&\rightarrow&
0\hspace*{2.2mm} ^-_3\hspace*{2.2mm}1\hspace*{7mm} 3\hspace*{2.2mm} ^-_0\hspace*{2.2mm} 2\hspace*{7mm}0\hspace*{2.2mm} ^-_3\hspace*{2.2mm}1&&\\
\!_2|\hspace*{6mm}_0|&&\!_2|\hspace*{6mm}_0|\hspace*{6mm}_1|\hspace*{6mm}_3|&&
\!_2|\hspace*{6mm}_0|\hspace*{6mm}_1|\hspace*{6mm}_3|\hspace*{7mm}_2|\hspace*{6mm}_0|&&\\
3\hspace*{2.2mm} ^-_1\hspace*{2.2mm}2&&3\hspace*{2.2mm} ^-_1\hspace*{2.2mm}2\hspace*{2.2mm} ^-_3\hspace*{2.2mm} 0\hspace*{2.2mm} ^-_2\hspace*{2.2mm} 1&&
3\hspace*{2.2mm} ^-_1\hspace*{2.2mm}2\hspace*{2.2mm} ^-_3\hspace*{2.2mm} 0\hspace*{2.2mm} ^{..}_2\hspace*{2.2mm} 1\hspace*{2.2mm}^-_0\hspace*{2.2mm}3\hspace*{2.2mm}^-_1\hspace*{2.2mm}2&&0\hspace*{2.2mm}^{..}_2\hspace*{2.2mm}1\\
\!_0|\hspace*{6mm}_3|&&\!_0|\hspace*{23.5mm}_0|&&
\!_0|\hspace*{42mm}_3|&&\,|_2\hspace{6mm}|_2\\
1\hspace*{2.2mm} ^-_2\hspace*{2.2mm}0&&1\hspace*{2.2mm} ^-_2\hspace*{2.2mm}0\hspace*{2.2mm} ^-_3\hspace*{2.2mm} 2\hspace*{2.2mm} ^-_1\hspace*{2.2mm} 3&&
\,1\hspace*{2.2mm} ^-_2\hspace*{2.2mm}0\hspace*{2.2mm} ^-_3\hspace*{2.2mm} 2\hspace*{2.2mm} ^-_1\hspace*{2.2mm} 3\hspace*{2.2mm}^-_0\hspace*{2.2mm}1\hspace*{2.2mm}^{..}_2\hspace*{2.2mm}0&&1\hspace*{2.2mm}^{..}_2\hspace*{2.2mm}0\\
\end{array}\end{eqnarray}
\end{example}

\begin{example}\label{ex3}
The right cutout in display (\ref{oct2}) can be modified by replacing its 4-belt $H$ whose two vertical edges were deleted (and replaced by colons) in its representation on the left of display (\ref{oct3})
by the transpose $(\cdot)^t$ of the copy of $P_6\square P_2$ on the right of the display (after $\cup$), with its leftmost and rightmost edges (having degree-2 end-vertices) identified respectively to the corresponding horizontal edges of $H$.  
\begin{eqnarray}\label{oct3}\begin{array}{lllll}
0\hspace*{2.2mm} _-^3\hspace*{2.2mm} 1\hspace*{2.2mm} _-^0\hspace*{2.2mm} 2\hspace*{2.2mm} _-^1\hspace*{2.2mm} 3\hspace*{2.2mm} _-^2\hspace*{2.2mm}  0\hspace*{2.2mm} _-^3\hspace*{2.2mm} 1\hspace*{2.2mm} _-^0\hspace*{2.2mm} 2\hspace*{2.2mm} _-^1\hspace*{2.2mm} 3\hspace*{2.2mm} _-^2\hspace*{2.2mm} 0 &&
3\hspace*{2.2mm} _-^0\hspace*{2.2mm} 1\hspace*{2.2mm} _-^2\hspace*{2.2mm} 0\hspace*{2.2mm} _-^3\hspace*{2.2mm} 2\hspace*{2.2mm} _-^1\hspace*{2.2mm}  3\hspace*{2.2mm} _-^0\hspace*{2.2mm} 1&\\
\!_1|\hspace*{6mm}_2|\hspace*{6mm}_3|\hspace*{6mm}:\hspace*{8mm}:\hspace*{5mm}_2|\hspace*{6mm}_3|\hspace*{6mm}_0|\hspace*{6mm}_1|&\cup\;(&
\!_2|\hspace*{6mm}_3|\hspace*{6mm}_1|\hspace*{6mm}_0|\hspace*{6mm}_2|\hspace*{6mm}_3|&)^t\\
2\hspace*{2.2mm} ^-_0\hspace*{2.2mm}3\hspace*{2.2mm} ^-_1\hspace*{2.2mm} 0\hspace*{2.2mm} ^-_2\hspace*{2.2mm} 1\hspace*{2.2mm} ^-_3\hspace*{2.2mm} 2\hspace*{2.2mm} ^-_0\hspace*{2.2mm}3\hspace*{2.2mm} ^-_1\hspace*{2.2mm} 0\hspace*{2.2mm} ^-_2\hspace*{2.2mm} 1\hspace*{2.2mm} ^-_3\hspace*{2.2mm} 2 &&
0\hspace*{2.2mm} ^-_1\hspace*{2.2mm}2\hspace*{2.2mm} ^-_0\hspace*{2.2mm} 3\hspace*{2.2mm} ^-_2\hspace*{2.2mm} 1\hspace*{2.2mm} ^-_3\hspace*{2.2mm} 0\hspace*{2.2mm} ^-_1\hspace*{2.2mm}2&\\
\end{array}\end{eqnarray}
\end{example}

\subsection{Toroidal cases}\label{toroidal}

\begin{definition}\label{normal} A {\it normal cutout} of a connected toroidal graph $\Gamma$ is a closed rectangle $\Phi$ of $\mathbb{R}^2$ with $V(\Gamma)\subset\Phi\cap\mathbb{Z}^2$ and $E(\Gamma)$ given by vertical and horizontal unit-length segments such that identification of the left and right borders of $\Phi$ and identification of the top and bottom borders of $\Phi$ yield a representation of $\Gamma$, with the four corners of $\Phi$ representing a sole vertex of $\Gamma$ in $\mathbb{Z}^2$. 
\end{definition}

The identifications in Definition~\ref{normal} have plane representations in periodic extensions $\Phi'$ of the normal cutout $\Phi$ of $\Gamma$ as given in the following Definition~\ref{pets}, where belts delimit faces in $\Phi$ or $\Phi'$ as needed ahead.

\begin{definition}\label{pets} A {\it periodic extension} of a normal cutout $\Phi$ of a connected toroidal graph $\Gamma$ is a normal cutout formed by the union of two or more copies $\Phi_1,\Phi_2,\ldots,\Phi_n$ of $\Phi$, ($1<n\in\mathbb{Z}$), stacked horizontally, or vertically, along common linear vertical, or horizontal, borders, respectively, between each $\Phi_i$ and $\Phi_{i+1}$, ($1\le i<n$), that is: by identifying the right, or bottom, border of $\Phi_i$ with the left, or top, border of $\Phi_{i+1}$, respectively. 
\end{definition}

The truncated square tiling of the plane (obtainable online in various drawing versions) has the normal cutout $\Phi$ of the ETCed toroidal vertex-transitive 16-vertex cubic graph of girth 4 shown on the left of display (\ref{tess}), where truncated squares appear as 8-belts in $\Phi$ or in its periodic extensions. 
The following Definition~\ref{reyes} is exemplified by the accordion unfolding $\Phi'$ on the right of (\ref{tess}) of the normal cutout $\Phi$, shown on its left, with $\ell=2$ in this case.

\begin{eqnarray}\label{tess}\begin{array}{ll|ll}
0\hspace*{2.2mm}_-^1\hspace*{2.2mm}2\hspace*{2.2mm}_-^0\hspace*{2.2mm}3\hspace*{2.2mm}..\hspace*{2.2mm}1\hspace*{2.2mm}_-^3\hspace*{2.2mm}0&&&
0\hspace*{2.2mm}_-^1\hspace*{2.2mm}2\hspace*{2.2mm}_-^0\hspace*{2.2mm}3\hspace*{2.2mm}_-^2\hspace*{2.2mm}1\hspace*{2.2mm}_-^3\hspace*{2.2mm}
0\hspace*{2.2mm}..\hspace*{2.2mm}2\hspace*{2.2mm}_-^0\hspace*{2.2mm}3\hspace*{2.2mm}_-^2\hspace*{2.2mm}1\hspace*{2.2mm}_-^3\hspace*{2.2mm}0\\
:\hspace*{14.5mm}_2|\hspace*{6mm}_2|\hspace*{7mm}:&&&
:\hspace*{32mm}_1|\hspace*{6mm}_1|\hspace*{25mm}:\\
2\hspace*{2.2mm}_-^1\hspace*{2.2mm}0\hspace*{2.2mm}_-^3\hspace*{2.2mm}1\hspace*{7mm}3\hspace*{2.2mm}_-^0\hspace*{2.2mm}2&&&
2\hspace*{2.2mm}_-^1\hspace*{2.2mm}0\hspace*{2.2mm}_-^3\hspace*{2.2mm}1\hspace*{2.2mm}_-^2\hspace*{2.2mm}3\hspace*{2.2mm}_-^0\hspace*{2.2mm}
2\hspace*{7mm}0\hspace*{2.2mm}_-^3\hspace*{2.2mm}1\hspace*{2.2mm}_-^2\hspace*{2.2mm}3\hspace*{2.2mm}_-^0\hspace*{2.2mm}2\\
\!_3|\hspace*{6mm}_2|\hspace*{6mm}_0|\hspace*{6mm}_1|\hspace*{6mm}_3|&&&
\!_3|\hspace*{6mm}_2|\hspace*{6mm}_0|\hspace*{6mm}_1|\hspace*{7mm}
\!_3|\hspace*{6mm}_2|\hspace*{6mm}_0|\hspace*{6mm}_1|\hspace*{6mm}_3|\\
1\hspace*{7mm}3\hspace*{2.2mm}_-^1\hspace*{2.2mm}2\hspace*{2.2mm}_-^3\hspace*{2.2mm}0\hspace*{2.2mm}_-^2\hspace*{2.2mm}1&&&
1\hspace*{7mm}3\hspace*{2.2mm}_-^1\hspace*{2.2mm}2\hspace*{2.2mm}_-^3\hspace*{2.2mm}0\hspace*{2.2mm}_-^2\hspace*{2.2mm}
1\hspace*{2.2mm}_-^0\hspace*{2.2mm}3\hspace*{2.2mm}_-^1\hspace*{2.2mm}2\hspace*{2.2mm}_-^3\hspace*{2.2mm}0\hspace*{2.2mm}_-^2\hspace*{2.2mm}1\\
\!_0|\hspace*{6mm}_0|\hspace*{23.5mm}_0|&&&
\!_0|\hspace*{6mm}_0|\hspace*{60mm}_0|\\
3\hspace*{7mm}1\hspace*{2.2mm}_-^2\hspace*{2.2mm}0\hspace*{2.2mm}_-^3\hspace*{2.2mm}2\hspace*{2.2mm}_-^1\hspace*{2.2mm}3&&&
3\hspace*{7mm}1\hspace*{2.2mm}_-^2\hspace*{2.2mm}0\hspace*{2.2mm}_-^3\hspace*{2.2mm}2\hspace*{2.2mm}_-^1\hspace*{2.2mm}
3\hspace*{2.2mm}_-^0\hspace*{2.2mm}1\hspace*{2.2mm}_-^2\hspace*{2.2mm}0\hspace*{2.2mm}_-^3\hspace*{2.2mm}2\hspace*{2.2mm}_-^1\hspace*{2.2mm}3\\
\!_2|\hspace*{6mm}_3|\hspace*{6mm}_1|\hspace*{6mm}_0|\hspace*{6mm}_2|&&&
\!_2|\hspace*{6mm}_3|\hspace*{6mm}_1|\hspace*{6mm}_0|\hspace*{7mm}
\!_2|\hspace*{6mm}_3|\hspace*{6mm}_1|\hspace*{6mm}_0|\hspace*{6mm}_2|\\
0\hspace*{2.2mm} _-^1\hspace*{2.2mm}2\hspace*{2.2mm}_-^0\hspace*{2.2mm}3\hspace*{2.2mm}..\hspace*{2mm}1\hspace*{2.2mm}_-^3\hspace*{2.2mm}0&&&
0\hspace*{2.2mm} _-^1\hspace*{2.2mm}2\hspace*{2.2mm}_-^0\hspace*{2.2mm}3\hspace*{2.2mm}_-^2\hspace*{2mm}1\hspace*{2.2mm}_-^3\hspace*{2.2mm}
0\hspace*{2.5mm} ..\hspace*{2.5mm}2\hspace*{2.2mm}_-^0\hspace*{2.2mm}3\hspace*{2.2mm}_-^2\hspace*{2mm}1\hspace*{2.2mm}_-^3\hspace*{2.2mm}0\\
\end{array}\end{eqnarray}

\begin{definition}\label{reyes}
Given a normal cutout $\Phi$ of a toroidal cubic graph $\Gamma$ of girth 4, an ({\it accordion}) {\it unfolding} of $\Phi$ is a cutout $\Phi'$ of a toroidal cubic graph $\Gamma'$ obtained by the replacement of 4-belts of the form $P_2\square P_2$ by copies of $P_2\square P_{2\ell}$, where $1<\ell\in\mathbb{Z}$.
\end{definition}

\begin{remark}\label{remark}
The leftmost cutout $\Phi$ in display (\ref{tess}) is related to the middle cutout in display (\ref{octaedro}) as follows. The leftmost vertical path of $\Phi$ equals the rightmost vertical path and is joined with the second leftmost path by two horizontal edges colored $0\hspace*{2.2mm}^1_-\hspace*{2.2mm}2$ and $2\hspace*{2.2mm}^1_-\hspace*{2.2mm}0$. We apply {\it cycle exchange} by replacing those two edges by the other two edges forming an auxiliary square $K_2\square K_2$ in the plane of  $\Phi$ but resulting in a cutout $\Phi'$ equivalent to the middle cutout $\Phi''$ in display (\ref{octaedro}) (with the leftmost vertical path $[1_-^32_-^10_-^23_-^01]^t$ of $\Phi''$ obtained as the second leftmost vertical path $[2_-^10_-^23_-^01_-^32]^t$ of $\Phi'$) of a planar graph by the now apparently separated modified leftmost path $[0_-^12_-^31_-^03_-^20]^t$ of $\Phi'$, since it is already present as the  now modified rightmost path of $\Phi'$. 

Similar cycle exchanges were given in the upper-right cutout of display (\ref{oct6}) for Example~\ref{ex1} and the right cutout of display (\ref{octaedro}) for Example~\ref{ex2}, where horizontal-edge pairs indicated by diaeresis pairs are to be replaced by the other two edges in the auxiliary squares $K_2\square K_2$ shown to the right. Another case of cycle exchange will appear in Theorem~\ref{tu} to increase the genus of the considered cubic graphs of genus 4. 

Similarly, the operation $(^{\mbox{tor}}_{\rightarrow})$ in display (\ref{oct6}) corresponds to an unfolding as in Definition~\ref{unfold}, where two compensation vertical edges are added to the original external face of a planar graph making it into a toroidal graph.
\end{remark}   

\begin{theorem}\label{16vertex}
A toroidal vertex-transitive cubic graph $\Gamma_1^1$ of girth 4 on $4\ell$ vertices, with $\ell=4$ and normal cutout $\Phi$ as on the left of display (\ref{tess}), or $4\le\ell\in\mathbb{Z}$ and normal cutout $\Psi$ as in the lower-right of display (\ref{oct6}) via operation $(_\rightarrow^{\mbox{\rm{tor}}})$ has an ETGC, as do all the toroidal vertex-transitive graphs $\Gamma_h^k$ on $4\ell hk$ vertices obtained from $\Phi$ or $\Psi$ by periodic extensions, both horizontally $h$ times and vertically $k$ times ($0<h,k\in\mathbb{Z}$). All 4-belts of the graphs $\Gamma_h^k$ and their limit plane tessellations have both their vertex sets and edge sets in bijective correspondence with the 4-color set $[4]$. Each 8-belt of $\Gamma_h^k$ has its antipodal elements (vertices, edges) with a common color, such that the 4 colors are employed, each twice.
No other TCs in such graphs exist but those ETGCs.
\end{theorem}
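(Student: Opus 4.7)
The plan is to first verify by direct inspection of the displayed cutouts that the exhibited colorings on $\Phi$ (the $4\times 4$ normal cutout of the truncated square tiling on the left of display (\ref{tess})) and on $\Psi$ (the cutout produced in the lower-right of display (\ref{oct6}) via $(_\rightarrow^{\mbox{\rm{tor}}})$) do constitute ETGCs of $\Gamma_1^1$. For each of these finite fundamental domains I would check three local conditions: that around every vertex $v$ the colour of $v$ together with the three incident edge colours constitute all of $[4]$; that each vertex-colour class is stable and pairwise at distance at least three, hence an EDS; and that each 4-belt uses all four colours exactly once on its vertices and exactly once on its edges. Because the colour assignments along opposite borders of $\Phi$ (and of $\Psi$) coincide by construction, these local checks propagate under the toroidal identifications of Definition~\ref{normal} to the whole of $\Gamma_1^1$.

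For the periodic extensions $\Gamma_h^k$, I would observe that the identifications between consecutive copies $\Phi_i$ and $\Phi_{i+1}$ in Definition~\ref{pets} glue borders whose colourings already agree by construction, so the ETGC lifts unchanged to every $\Gamma_h^k$; vertex-transitivity of $\Gamma_1^1$ likewise lifts to $\Gamma_h^k$. The statement about 4-belts then follows because every 4-belt of $\Gamma_h^k$ is a translate of some 4-belt visible in $\Phi$ (or $\Psi$), each of which already satisfies the required bijection with $[4]$ on vertices and on edges. For the 8-belts I would pick a single representative 8-belt (a truncated square of the tiling) inside $\Phi$, verify the antipodal equality of vertex and edge colours directly on the cutout, and then transport the property to every 8-belt by vertex-transitivity combined with the periodicity of the coloring; the antipodal repetition forces each of the four colours to appear exactly twice on the vertices of the 8-cycle and exactly twice on its edges.

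The main obstacle, and the substantive part of the proof, is the uniqueness assertion. My approach is rigidity. By Theorem~\ref{fo}, any TC with four colours forces every belt length to be a multiple of $4$ and in particular forces the antipodal repetition patterns on 4-belts and 8-belts described in the statement. I would then fix a vertex $v_0$ of $\Gamma_h^k$ and, after a global permutation of $[4]$, prescribe the colours of $v_0$ and its three neighbours. Starting from the 4-belts and 8-belts incident to $v_0$, I would show that each further vertex and each further incident edge admits at most one legal colour under the combined constraints of properness, partition into EDSs, and the enforced belt patterns. Propagating this forcing step by step across $\Phi$ (or $\Psi$), and then to the rest of $\Gamma_h^k$ by periodicity, uniquely determines every colour; hence any TC with $4$ colours coincides, up to a global permutation of $[4]$, with the exhibited ETGC, and is automatically efficient. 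The delicate point here is to verify that no branching ever occurs during the propagation, i.e.\ that the local constraints are genuinely rigid on these cutouts, which should be verifiable by a finite case analysis within a single fundamental domain.
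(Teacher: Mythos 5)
Your proposal follows essentially the same route as the paper: exhibit the coloring on the fundamental domain ($\Phi$ or $\Psi$), check the ETGC conditions locally, and propagate to every $\Gamma_h^k$ by the periodic extensions of Definition~\ref{pets}, using the fact that the border colorings of consecutive copies agree. The paper's own proof is in fact far terser than yours --- it consists of little more than ``assigning color numbers as in the displays yields the ETGC, this generalizes by double continuation, and clearly no other TCs exist'' --- so your explicit verification of the per-vertex bijection with $[4]$, the EDS property of each color class, the 4-belt bijections, and the antipodal pattern on a representative 8-belt transported by vertex-transitivity is a genuine improvement in rigor on the existence half. On the uniqueness half, where the paper offers only the word ``clearly,'' your rigidity/forcing programme is the right idea and is more than the paper provides; two caveats, though. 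First, Theorem~\ref{fo} is stated for \emph{planar} graphs and for \emph{efficient} TCs, so it cannot be invoked to conclude that an arbitrary TC of a toroidal $\Gamma_h^k$ already exhibits the antipodal belt patterns; that pattern is precisely what your forcing argument must \emph{derive}, not assume. Second, the forcing is not immediate at the vertex level (knowing the color of $v$ and its three incident edges leaves two admissible colors for each neighbor a priori, and an arbitrary TC could in principle color a 4-belt's edges with only two alternating colors), so the claimed absence of branching really does rest on the finite case analysis you defer; until that is carried out, the uniqueness assertion remains a plan rather than a proof --- but that is a gap you share with, and have at least identified more honestly than, the paper itself.
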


\begin{proof}
By assigning color numbers in the set [4] to the vertices and edges of $\Gamma_1^1$ as on the left of display (\ref{tess}) or the lower-right of display (\ref{oct6}), one such claimed ETGCs is obtained. This generalizes by double continuation for all graphs $\Gamma_h^k$. Clearly, 
no other TCs exists in them. 
\end{proof}

\begin{theorem}\label{sonidos}
Starting with a normal cutout $\Phi$ of a toroidal cubic graph $\Gamma$ of girth 4 as in the statement of Theorem~\ref{starting},  
successive unfoldings and periodic extensions yield normal cutouts $\Phi'$ of toroidal cubic graphs $\Gamma'$ lacking $\ell$-belts with $\ell\not\equiv 0 \mod 4$ that have ETGCs. This  produces an infinite collection of such graphs $\Gamma'$..
\end{theorem}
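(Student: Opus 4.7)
The plan is to adapt the accordion-unfolding machinery used for planar graphs in Theorem~\ref{starting} to the toroidal setting, with normal cutouts (Definition~\ref{normal}) replacing planar ones. As the seed I take any normal cutout $\Phi$ of an already ETGCed toroidal cubic graph $\Gamma$ of girth 4, for instance the 16-vertex cutout on the left of display (\ref{tess}) or, more generally, any $\Gamma_h^k$ furnished by Theorem~\ref{16vertex}; the goal is to show that iterated applications of Definition~\ref{reyes} (accordion unfoldings) and Definition~\ref{pets} (periodic extensions) yield an infinite family of ETGCed toroidal graphs, provided the mod-4 belt hypothesis is maintained throughout.

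First I would verify that a single accordion unfolding applied to a 4-belt $X$ of $\Phi$ produces a normal cutout $\Phi'$ of a toroidal cubic graph $\Gamma'$ still carrying an ETGC, as long as no $\ell$-belt with $\ell\not\equiv 0\pmod 4$ is created. The unfolding replaces a copy of $P_2\square P_2$ by $P_2\square P_{2\ell}$, inserting $2(\ell-1)$ new vertices and a matching block of new edges. I would extend the given ETGC across the insertion exactly as in displays (\ref{octubre}) and (\ref{noviembre}), by repeating the four-color cyclic pattern $(0,1,2,3)$ along each of the two horizontal paths and on the vertical rungs of $P_2\square P_{2\ell}$, with the shifts inherited from the original 4-belt $X$. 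Because both horizontal paths of the inserted block inherit cyclic shifts of $(0,1,2,3)$ and the vertical rungs interleave with complementary shifts, the TC condition of Definition~\ref{ahora}(a) is preserved at each new vertex and the perfect-code condition of Definition~\ref{ahora}(b) persists globally.

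Having handled one unfolding, I would then iterate. Each further unfolding of a 4-belt of the current cutout produces a still larger ETGCed toroidal graph, and interleaving these with periodic extensions of Definition~\ref{pets} (which merely glue colour-matched copies along borders) preserves both the toroidal normal-cutout structure and the ETGC property. Varying the unfolding parameter $\ell$ and the extension multiplicities $h,k$ at each stage produces toroidal graphs of arbitrarily many vertices, so infinitely many non-isomorphic examples arise; the argument is closed by induction on the number of operations performed.

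The hard part will be the bookkeeping of belt lengths modulo 4 as successive unfoldings are performed. Each unfolding of a 4-belt $X$ increases the length of the two belts sharing a horizontal side of $X$ by $2(\ell-1)$, while later unfoldings may grow other belts by similar increments, and the identifications at the borders of the normal cutout must remain colour-compatible. A careful accounting, mimicking the proof of Theorem~\ref{fo} now adapted to normal cutouts of toroidal graphs, shows that one must restrict to those sequences of unfoldings whose parameters $\ell$ keep every affected belt of length divisible by 4, and also maintain matching colours on the left-right and top-bottom borders of $\Phi'$. This restriction is precisely the hypothesis in the statement, and once it is respected, the cyclic $(0,1,2,3)$-colour pattern can always be continued consistently across the insertions, reducing the proof to a finite verification for each elementary unfolding together with the inductive step.
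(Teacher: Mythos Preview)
Your proposal is correct and follows essentially the same approach as the paper: adapt the accordion-unfolding argument of Theorem~\ref{starting} to normal cutouts of toroidal graphs, using the colour-extension patterns of displays (\ref{octubre})--(\ref{noviembre}) and the periodic extensions of Definition~\ref{pets}. The paper's own proof is in fact a one-line reduction to Theorem~\ref{starting} (``the same considerations\ldots\ but taking care of the lattice containment of each resulting graph $\Gamma'$''), so your write-up is considerably more explicit---in particular your remarks on border colour-compatibility under the left--right and top--bottom identifications are exactly what the paper's phrase ``lattice containment'' is gesturing at.
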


\begin{proof}
The same considerations given en the proof of Theorem~\ref{starting} leads to the completion of the proof, but taking care of the lattice containment of each resulting graph $\Gamma'$.
\end{proof}

\begin{example}\label{sondos}
%Display (\ref{te}) contains two cases of the statement of Theorem~\ref{sonidos}.
The left case in display (\ref{te}) is obtained from $Q_3$ drawn via a normal cutout $\Phi$ of $Q_3$,   
but note here that the two shown 8-cycles in such $\Phi$ are not really 8-belts; 
as a result, vertically-stacked periodic extensions of such normal cutout do not work properly, as is the case of normal cutouts of toroidal graphs; in fact, one instance of such normal-cutout extension leads to a normal cutout unrelated to that of $Q_3$, with an ETGC equivalent to that on the left of display (\ref{tess}), but now represented as the {\it tilted cutout} on the right of (\ref{te}), where the top and bottom borders are to be identified (with no horizontal displacements) and the right border is to be identified two layers below the left border, indicated by colons and semicolons on such borders. Notice that a single horizontal periodic extension of such tilted cutout yields a normal cutout.

\begin{eqnarray}\label{te}\begin{array}{ll|ll}
0\hspace*{2.2mm}_-^3\hspace*{2.2mm}1\hspace*{2.2mm}_-^0\hspace*{2.2mm}2\hspace*{2.2mm}_-^1\hspace*{2.2mm}3\hspace*{2.2mm}_-^2\hspace*{2.2mm}0&&&
0\hspace*{2.2mm}_-^3\hspace*{2.2mm}1\hspace*{2.2mm}_-^2\hspace*{2.2mm}3\hspace*{2.2mm}_-^1\hspace*{2.2mm}0\hspace*{2.2mm}_-^3\hspace*{2.2mm}2\\
\!_1|\hspace*{6mm}_2|\hspace*{23.5mm}_1|&&&
\!_2|\hspace*{6mm}_0|\hspace*{23.5mm}_0|\\
2\hspace*{2.2mm} _-^0\hspace*{2.2mm} 3\hspace*{2.2mm}_-^1\hspace*{2.2mm}0\hspace*{2.2mm}_-^2\hspace*{2.2mm}1\hspace*{2.2mm}_-^3\hspace*{2.2mm}2&&&
3\hspace*{2.2mm}_-^1\hspace*{2.2mm}2\hspace*{2.2mm}_-^3\hspace*{2.2mm}0\hspace*{2.2mm}_-^1\hspace*{2.2mm}3\hspace*{2.2mm}_-^2\hspace*{2.2mm}1\\
:\hspace*{15mm}_3|\hspace*{6mm}_0|\hspace*{6mm}:&&&
:\hspace*{15mm}_2|\hspace*{6mm}_0|\hspace*{7.5mm};\\
0\hspace*{2.2mm}_-^3\hspace*{2.2mm}1\hspace*{2.2mm}_-^0\hspace*{2.2mm}2\hspace*{2.2mm}_-^1\hspace*{2.2mm}3\hspace*{2.2mm}_-^2\hspace*{2.2mm}0&&&
2\hspace*{2.2mm}_-^1\hspace*{2.2mm}3\hspace*{2.2mm}_-^0\hspace*{2.2mm}1\hspace*{2.2mm}_-^3\hspace*{2.2mm}2\hspace*{2.2mm}_-^1\hspace*{2.2mm}0\\
&&&\!_0|\hspace*{6mm}_2|\hspace*{23.5mm}_2|\\
&&&1\hspace*{2.2mm}_-^3\hspace*{2.2mm}0\hspace*{2.2mm}_-^1\hspace*{2.2mm}2\hspace*{2.2mm}_-^3\hspace*{2.2mm}1\hspace*{2.2mm}_-^0\hspace*{2.2mm}3\\
&&&;\hspace*{15mm}_0|\hspace*{6mm}_2|\hspace*{7mm}:\\
&&&0\hspace*{2.2mm}_-^3\hspace*{2.2mm}1\hspace*{2.2mm}_-^2\hspace*{2.2mm}3\hspace*{2.2mm}_-^1\hspace*{2.2mm}0\hspace*{2.2mm}_-^3\hspace*{2.2mm}2\\
\end{array}\end{eqnarray}

On the other hand, successive replacement of $P_2\square P_2$ by $P_2\square P_4$ on the mentioned left normal cutout in (\ref{te}) yields the left normal cutout of (\ref{tessa}), while the right case of display (\ref{tessa}) is obtained from the lower-right of display (\ref{oct6}), again via successive unfoldings. 
 For a novel example in a surface of genus 2, two vertical segments in this normal cutout representing edges $e$ and $f$ with common endvertex and edge colors (in this case 0 and 2 for the endvertices and 1 for the edges) are replaced by corresponding colons in order to illustrate the generation of another $\Gamma'$ with a new pair of edges with similar color structure and forming a 4-cycle with $e$ and $f$, which are therefore deleted to yield an ETGC. The resulting exchange 4-cycle is drawn on the lower-right of display (\ref{tessa}). Since this operation is done on a toroidal graph via the addition of a handle to the torus to trace on it the new edges, the resulting $\Gamma$ is a graph only embeddable in the oriented surface 2. Note the deletion of $e$ and $f$ leaves the resulting modified cutout with two new cycles of lengths 6 and 14 that must be taken as the borders of the mentioned handle. Compare with the upper-right of display (\ref{oct6}) and lower-right of display (\ref{octaedro}) in Examples~\ref{ex1} and~\ref{ex2}, respectively.

\begin{eqnarray}\label{tessa}\begin{array}{ll|llc|c}
0\hspace*{2.2mm} _-^3\hspace*{2.2mm} 1\hspace*{2.2mm} _-^0\hspace*{2.2mm} 2\hspace*{2.2mm} _-^1\hspace*{2.2mm} 3\hspace*{2.2mm} _-^2\hspace*{2.2mm}0
&&&0\hspace*{2.2mm}_-^3\hspace*{2.2mm}1\hspace*{2.2mm}_-^2\hspace*{2.2mm}3\hspace*{2.2mm}_-^0\hspace*{2.2mm}2\hspace*{2.2mm}_-^3\hspace*{2.2mm}  1\hspace*{2.2mm}_-^2\hspace*{2.2mm}0\hspace*{2.2mm}_-^3\hspace*{2.2mm}2\hspace*{2.2mm}_-^1\hspace*{2.2mm} 3\hspace*{2.2mm} _-^2\hspace*{2.2mm} 0 &&\\ 
:\hspace*{15mm}_3|\hspace*{6mm}_0|\hspace*{5.6mm}:
&&&\!_1|\hspace*{24mm}_1\!:\hspace*{4mm}_0|\hspace*{24mm}_0|\hspace*{6mm}_1|&&\\
:\hspace*{16.2mm} 0\hspace*{2.2mm} ^-_2\hspace*{2.2mm}1\hspace*{5.2mm}:
&&&2\hspace*{2.2mm} ^-_0\hspace*{2.2mm}3\hspace*{2.2mm}^-_2\hspace*{2.2mm} 1\hspace*{2.2mm} ^-_3\hspace*{2.2mm} 0\hspace*{7mm} 3\hspace*{2.2mm} ^-_1\hspace*{2.2mm}2\hspace*{2.2mm}^-_3\hspace{2.2mm}1\hspace*{2.2mm} ^-_2\hspace*{2.2mm} 1\hspace*{7mm}2&& \\
:\hspace*{15mm}_1|\hspace*{6mm}_3|\hspace*{5.6mm}:
&&&\!_3|\hspace*{6mm}_1|\hspace*{6mm}_0|\hspace*{6mm}_2|\hspace*{6mm}_2|\hspace*{6mm}_0|\hspace*{6mm}_1|\hspace*{6mm}_3|\hspace*{7mm}_3|&&\\
:\hspace*{16.2mm} 3\hspace*{2.2mm} ^-_0\hspace*{2.2mm} 2\hspace*{5.2mm}:
&&&1\hspace*{2.2mm} ^-_2\hspace*{2.2mm}0\hspace*{2.2mm}^-_3\hspace*{2.2mm}2\hspace*{2.2mm} ^-_1\hspace*{2.2mm} 3\hspace*{7mm} 0\hspace*{2.2mm} ^-_3\hspace*{2.2mm}1\hspace*{2.2mm}^-_2\hspace*{2.2mm} 3\hspace*{2.2mm} ^-_0\hspace*{2.2mm} 2\hspace*{7mm}1&& \\
:\hspace*{15mm}_2|\hspace*{6mm}_1|\hspace*{5.6mm}:
&&&\!_0|\hspace*{24mm}_0|\hspace*{6mm}_1|\hspace*{24mm}_1|\hspace*{6mm}_0|&&\\
3\hspace*{2.2mm} ^-_1\hspace*{2.2mm}2\hspace*{2.2mm} ^-_0\hspace*{2.2mm} 1\hspace*{2.2mm} ^-_3\hspace*{2.2mm} 0\hspace*{2.2mm} ^-_2\hspace*{2.2mm}3
&&&3\hspace*{2.2mm} ^-_1\hspace*{2.2mm}2\hspace*{2.2mm} ^-_3\hspace*{2.2mm} 0\hspace*{2.2mm} ^-_2\hspace*{2.2mm} 1\hspace*{2.2mm} ^-_3\hspace*{2.2mm} 2\hspace*{2.2mm} ^-_0\hspace*{2.2mm}3\hspace*{2.2mm} ^-_2\hspace*{2.2mm} 1\hspace*{2.2mm} ^-_3\hspace*{2.2mm} 0\hspace*{2.2mm} ^-_2\hspace*{2.2mm} 3&& \\ 
\!_0|\hspace*{6mm}_3|\hspace*{24mm}_0|
&&&\,:\hspace*{5.2mm}_0|\hspace*{6mm}_1|\hspace*{24mm}_1|\hspace*{6mm}_0|\hspace*{16mm}:&&\\
1\hspace*{2.2mm} ^-_2\hspace*{2.2mm}0\hspace*{25mm} 1
&&&0\hspace*{2.2mm} ^-_3\hspace*{2.2mm}1\hspace*{7mm} 3\hspace*{2.2mm} ^-_0\hspace*{2.2mm} 2\hspace*{2.2mm}^-_3\hspace*{2.2mm} 1\hspace*{2.2mm} ^-_2\hspace*{2.2mm}0\hspace*{7mm} 2\hspace*{2.2mm} ^-_1\hspace*{2.2mm} 3\hspace*{2.2mm}^-_2\hspace*{2.2mm}1&& \\
\!_3|\hspace*{6mm}_1|\hspace*{24mm}_3|
&&&\!_1|\hspace*{6mm}_2|\hspace*{6mm}_2|\hspace*{6mm}_1\!:\hspace*{4mm}_0|\hspace*{6mm}_3|\hspace*{6mm}_3|\hspace*{6mm}_0|\hspace*{6mm}_1|&&\\
2\hspace*{2.2mm} ^-_0\hspace*{2.2mm}3\hspace*{25mm}2 
&&&2\hspace*{2.2mm} ^-_0\hspace*{2.2mm}3\hspace*{7mm} 1\hspace*{2.2mm} ^-_3\hspace*{2.2mm} 0\hspace*{2.2mm}^-_2\hspace*{2.2mm} 3\hspace*{2.2mm} ^-_1\hspace*{2.2mm}2\hspace*{7mm} 0\hspace*{2.2mm} ^-_2\hspace*{2.2mm} 1\hspace*{2.2mm}^-_3\hspace{2.2mm}3&&2\hspace*{2.2mm}_-^1\hspace*{2.2mm}0\\
\!_1|\hspace*{6mm}_2|\hspace*{24mm}_1|
&&&\,:\hspace*{5.2mm}_1|\hspace*{6mm}_0|\hspace*{24mm}_0|\hspace*{6mm}_1|\hspace*{15.5mm}:&&_1\!:\hspace*{6mm}_1\!:\\
0\hspace*{2.2mm} _-^3\hspace*{2.2mm} 1\hspace*{2.2mm} _-^0\hspace*{2.2mm} 2\hspace*{2.2mm} _-^1\hspace*{2.2mm} 3\hspace*{2.2mm} _-^2\hspace*{2.2mm}0
&&&1\hspace*{2.2mm} ^-_2\hspace*{2.2mm}0\hspace*{2.2mm} ^-_3\hspace*{2.2mm} 2\hspace*{2.2mm} ^-_1\hspace*{2.2mm} 3\hspace*{2.2mm} ^-_2\hspace*{2.2mm} 0\hspace*{2.2mm} ^-_3\hspace*{2.2mm}1\hspace*{2.2mm} ^-_2\hspace*{2.2mm} 3\hspace*{2.2mm} ^-_0\hspace*{2.2mm} 2\hspace*{2.2mm} ^-_3\hspace*{2.2mm} 1 &&0\hspace*{2.2mm}^-_1\hspace*{2.2mm}2\\
\end{array}\end{eqnarray}

\end{example}

\subsection{Algorithmic aspects of ETGCs}\label{alg}

\begin{definition}\label{sabado}
Given a cutout or normal cutout $\Phi$ of a planar or toroidal, respectively, cubic graph $\Gamma$ of girth 4 with all its $\ell$-belts having $\ell\equiv 0 \mod 4$, and given a vertex $v$ of $\Gamma$ incident to edges $e=(v,v_e),f=(v,v_f)$ and $g=(v,v_g)$, the coloring operation {\it ETCing} (for {\it efficient-total coloring}) by means of the color set $\{c_0,c_1,c_2,c_3\}=\{0,1,2,3\}=[4]$ at the vertex $v$ is given as follows:
\begin{enumerate}\item if $v,e,f$ are attributed colors $c_0,c_1,c_2$, respectively, then $g$ is assigned color $c_3$;
\item if $v,v_e,v_f$ are attributed colors $c_0,d_1,d_2$, respectively, where $[4]=\{c_0,d_1,d_2,d_3\}$, then $e_g$ is assigned color $d_3$;
\item each belt $H$ in $\Phi$ is attributed colors periodically: $$(c_0,d_0,c_1,d_1,c_2,d_2,c_3,d_3,\ldots,c_0,d_0,c_1,d_1,c_2,d_2,c_3,d_3)$$ where the $c_i$ are the colors for the successive vertices of $H$ and the $d_i$ are the colors for the edges between those successive vertices.
\end{enumerate}
\end{definition}

\begin{theorem}\label{tt} Let $\Phi$, $\Gamma,$ $v,e,f$ and $g$ be as assumed in Definition~\ref{sabado} and let $v$ belong to a 4-belt $H_0$ of $\Phi$.
Initializing by coloring $\{v,e,f,g\}$ in one-to-one correspondence with $[4]$ and continuing by coloring the remaining vertices of $H_0$, e.g.
 as in either case of display (\ref{algo}),
\begin{eqnarray}\label{algo}\begin{array}{llllll}
2\hspace*{2.2mm}_-^1\hspace*{2.2mm}0\hspace*{2.2mm}_-^3\hspace*{2.2mm}1\hspace*{2.2mm}_-^2\hspace*{2.2mm}3
&&&&2\hspace*{2.2mm}_-^3\hspace*{2.2mm}0\hspace*{2.2mm}_-^2\hspace*{2.2mm}1\hspace*{2.2mm}_-^0\hspace*{2.2mm}3\\
\hspace*{8.5mm}\!_2|\hspace*{6mm}_0|&&\mbox{or}&&\hspace*{8.5mm}\!_1|\hspace*{6mm}_3|\\
1\hspace*{2.2mm}_-^0\hspace*{2.2mm}3\hspace*{2.2mm}_-^1\hspace*{2.2mm}2\hspace*{2.2mm}_-^3\hspace*{2.2mm}0
&&&&1\hspace*{2.2mm}_-^2\hspace*{2.2mm}3\hspace*{2.2mm}_-^0\hspace*{2.2mm}2\hspace*{2.2mm}_-^1\hspace*{2.2mm}0\\
\end{array}\end{eqnarray} 
 and surrounding vertices via items 1 and 2 and neighboring belts via item 3, 
 forced continuation via the ETCing operation allows to obtain an ETGC in $\Gamma$ by its completion in $\Phi$. Moreover, as in display~\ref{algo}, there are two ETCs on $\Gamma$ over a common VC of $V(\Gamma)$ with no common color of such ETCs on each fixed edge of $\Gamma$. The two resulting ETCs guarantee the existence of an EGC on the prism $\Gamma\square P_2$.
\end{theorem}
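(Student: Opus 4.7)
The plan is to organize the argument into three stages: a base case on the starting 4-belt, deterministic global propagation via the forcing rules, and assembly of the EGC on the prism using orthogonality.

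First, for the base case, I would verify that the initial assignment of four distinct colors of $[4]$ to $\{v,e,f,g\}$ satisfies the TC condition at $v$ by construction, matching Definition~\ref{ahora}(a). Either of the two options in display~(\ref{algo}) extends this to a TC of the 4-belt $H_0$: a direct check along the four vertices of $H_0$ shows that each vertex has its closed neighborhood colored bijectively with $[4]$. Crucially, the two options agree on every vertex of $H_0$ but disagree on every edge of $H_0$, which sets up the orthogonality required by Definition~\ref{ortho}.

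Second, I would argue that the ETCing operation extends the coloring deterministically to all of $\Phi$. Rule~(1) forces the color of the third edge at any partially colored vertex, and rule~(2) forces the color of the third neighbor. Starting from $H_0$, this spreads uniquely to every adjacent 4-belt and, by induction on distance in $\Gamma$ from $H_0$, to every vertex and edge. The key global consistency check is that traversing any closed $\ell$-belt, the period-$4$ vertex pattern and period-$4$ edge pattern of rule~(3) close up; this holds exactly because of the standing hypothesis $\ell\equiv 0\pmod 4$ (justified globally by Theorem~\ref{fo}). That the resulting TC is an ETGC then follows from Definition~\ref{ahora}(b) and Definition~\ref{hoy}: each color class $S_i$ is independent (rule~(1) prevents monochromatic adjacencies) and efficiently dominates $V(\Gamma)\setminus S_i$, since any $u\notin S_i$ has exactly one neighbor in $S_i$, namely the endpoint of its incident edge colored~$i$ (forced by rule~(2)); while on every girth 4-cycle the coloring exhausts $[4]$ both on vertices and on edges by the periodic pattern in rule~(3).

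Third, applying the same algorithm with the alternative initialization of display~(\ref{algo}) yields a second ETC sharing the same vertex coloring as the first but disagreeing on every edge; by Definition~\ref{ortho} these are orthogonal ETCs. To extract the EGC on $\Gamma\square P_2$, I would reproduce the construction from the closing paragraphs of the proof of Theorem~\ref{t1}: take two parallel copies of $\Gamma$, color the edges of one copy via the first ETC and the edges of the other copy via the second, and color each rung edge by the common color of its two end-vertices. Orthogonality ensures that on every girth 4-cycle in $\Gamma\square P_2$, all four colors of $[4]$ occur exactly once among the edges, verifying the EGC condition of Definition~\ref{egc}.

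The main obstacle is the global consistency step: local forcing is immediate, but one must show that distinct propagation paths from $H_0$ to a common further vertex (or edge) agree on the color they assign. The belt condition $\ell\equiv 0\pmod 4$ is precisely the algebraic constraint ruling out monodromy around any belt, and careful bookkeeping at the junctions where adjacent belts share an edge suffices to turn this into a rigorous induction on the dual graph of $\Phi$.
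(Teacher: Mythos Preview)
Your proposal is correct and follows essentially the same route as the paper's proof: initialize on a 4-belt, propagate deterministically via the ETCing rules of Definition~\ref{sabado}, invoke the hypothesis $\ell\equiv 0\pmod 4$ for closure around each belt, and then build the EGC on $\Gamma\square P_2$ by coloring rungs with the common vertex color. You supply considerably more detail than the paper does---in particular the explicit verification that the two orthogonal ETCs placed on the two sheets, together with the rung coloring, make every girth 4-cycle of the prism rainbow, and the acknowledgment that global consistency of the propagation is the genuine content---whereas the paper's argument is a brief sketch pointing to the earlier displays. One small correction: the condition $\ell\equiv 0\pmod 4$ is a standing \emph{hypothesis} of Definition~\ref{sabado}, not something to be derived here from Theorem~\ref{fo} (which in any case is stated only for plane graphs), so your parenthetical citation of Theorem~\ref{fo} should be dropped.
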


\begin{proof}
Starting at any 4-belt of $\Phi$ that is ETCed, as in both examples in display (\ref{algo}) (under a common vertex coloring),  allows a forced continuation of the ETCing operation, as shown partially in display (\ref{algo}) at both sides of each of the two 4-belts. Since the $\ell$-belts of $\Gamma$ have $\ell\equiv 0 \mod 4$, continuation of ETCing is fulfilled at each such belt by completing periodically the coloring of edges and vertices,
 exemplified by the TCs of  the belts in displays (\ref{oct6}), (\ref{octaedro}), (\ref{oct3}) and (\ref{tessa}). This produces a forced ETGC by just starting ETCing a single 4-belt.
Since there are two ETCs for $\Gamma$ for a common TC of $\Gamma$ the common vertex coloring is used to assigned those common colors to the edges of the form $(v,0)(v,1)$ in the prism $\Gamma\square P_2$, where $V(P_2)=\{0,1\}$. 
\end{proof}

\subsection{Raising the genus via handles}\label{hand}

\begin{theorem}\label{tu}
Assume that $\Phi$, $\Gamma$ and an ETGC $\Psi$ of $\Gamma$ obtained via $\Phi$ are produced by means of Theorem~\ref{tt}. 
%having started on either Theorems~\ref{t1}, \ref{starting}, \ref{16vertex} or \ref{sonidos}. 
Given two nonadjacent edges $e=(v,v')$ and $f=(w,w')$ of $\Gamma$ in $\Phi$ with the same vertex-and-edge color pattern in $\Psi$, e.g., $\Psi(v)=\Psi(w)$, $\Psi(v')=\Psi(w')$ and $\Psi(e)=\Psi(f)$, consider the 4-cycle formed by $e$, $f$, $g=(v,w)$ and $h=(v',w')$. Then, by replacing $e$ and $f$ with $g$ and $h$ in $\Gamma$, resulting in a new cubic graph $\Gamma'=(\Gamma\setminus\{e,f\})\cup\{g,h\}$, and keeping all vertex colors and remaining edge colors while adopting new colors $\Psi(g)=\Psi(h)$ equal to old colors $\Psi(e)=\Psi(f)$, an ETGC of $\Gamma'$ is obtained. If $e$ and $f$ separate pairs of belts $\{H_e,H'_e\}$ and $\{H_f,H'_f\}$, respectively, where $H_e,H'_e,H_f,H'_f$ are four different belts of $\Phi$, then the genus of $\Gamma'$ is one more than the genus of $\Gamma$.
\end{theorem}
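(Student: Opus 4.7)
The plan is to separate the two claims: first verify that $\Gamma'$ with the inherited vertex coloring and the prescribed new edge colors is still an ETGC, and then deduce the genus increase via Euler's formula applied to the face-count change induced by the swap.

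For the first claim, the only vertices whose closed neighborhoods are altered by the swap are $v, v', w, w'$. At $v$, for instance, the neighbor $v'$ (joined by $e$) is exchanged for the neighbor $w$ (joined by $g$); under the matching color pattern on the exchange 4-cycle, $\Psi(w)$ coincides with the color of the displaced neighbor and $\Psi(g)=\Psi(e)$, so the bijection $N[v]\leftrightarrow[4]$ is preserved with exactly the same color record. The analogous check applies at $v', w$ and $w'$, and at every other vertex the neighborhood is completely unchanged. Hence the TC condition persists vertex by vertex. The vertex color classes $S_0,\ldots,S_3$ remain unchanged as subsets of $V(\Gamma)$, and the new edges $g,h$ connect vertices of distinct colors (again by the color pattern on the exchange 4-cycle), so each $S_i$ stays independent in $\Gamma'$ and every count $|N_{\Gamma'}[u]\cap S_i|$ agrees with its $\Gamma$-value, namely $1$ for $u\notin S_i$. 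The girth 4-cycles of $\Gamma'$ are either 4-cycles of $\Gamma$ disjoint from $\{e,f\}$ or 4-cycles obtained from such cycles by the edge swap $e\to g$ or $f\to h$, and in each case the VEGC four-color bijection on vertices and edges transfers directly.

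For the genus claim, Euler's formula yields $|V|-|E|+|F|=2-2\gamma$ in each embedding. Since $\Gamma$ and $\Gamma'$ share the same vertex count and the same edge count, the genus difference reduces to $\gamma'-\gamma=(|F|-|F'|)/2$, so I only need to track the face count. Removing $e$ merges its two distinct incident belts $H_e$ and $H'_e$ into a single face, dropping $|F|$ by one; removing $f$ similarly merges $H_f$ with $H'_f$, dropping $|F|$ by another. Under the hypothesis that all four belts are pairwise distinct, these two face mergings are independent and produce a total drop of two. To embed the new edges $g$ and $h$ without crossings, a handle must be attached to the surface, on which $g$ and $h$ can be drawn as parallel non-separating arcs contributing zero to the face count. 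Thus $|F'|=|F|-2$ and $\gamma'=\gamma+1$.

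The main obstacle I anticipate is the topological justification that the handle is genuinely necessary: one must show that after the two edge removals, $v$ and $w$ lie in different faces of the truncated embedding, and likewise $v'$ and $w'$, so that $g$ and $h$ cannot be drawn in the remaining surface without crossings. For this I would analyze the face incidences at each of the four affected vertices, using that each vertex of a cubic 2-cell embedded graph lies on exactly three faces, and argue that under the distinct-belts hypothesis none of the merged faces at $v$ coincides with the ``third'' face at $w$ (and similarly for $v', w'$), which forces the addition of a new handle to accommodate $g$ and $h$.
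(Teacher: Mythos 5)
Your verification of the coloring is actually more explicit than the paper's own proof, which never checks the TC/EDS conditions at the four affected vertices; your local analysis at $v,v',w,w'$ is the right way to do it. One caution before that: taken literally, the clause $\Psi(v)=\Psi(w)$ in the statement would make $g=(v,w)$ join two vertices of the same color, destroying properness. The worked instances (displays (\ref{oct6}), (\ref{octaedro}), (\ref{tessa})) show the intended matching is $\Psi(w)=\Psi(v')$ and $\Psi(w')=\Psi(v)$, which is what your argument silently uses when you say ``$\Psi(w)$ coincides with the color of the displaced neighbor''; this should be stated, since it is the only reading under which the theorem is true.

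Two genuine gaps. First, your description of the girth cycles of $\Gamma'$ does not hold up: the ``edge swap $e\to g$'' replaces $(v,v')$ by $(v,w)$, which changes an endpoint, so it cannot carry a 4-cycle of $\Gamma$ into a 4-cycle of $\Gamma'$ — the 4-cycles of $\Gamma$ through $e$ or $f$ are simply destroyed, and genuinely new 4-cycles (or even triangles) can arise from any path of length at most 3 joining $v$ to $w$, or $v'$ to $w'$, in $\Gamma\setminus\{e,f\}$. Such new short cycles are not controlled by your argument and must be shown either not to exist or to satisfy the VEGC bijection; the paper sidesteps this by discussing only the two merged face cycles, whose lengths are $\equiv 2 \bmod 4$, and is silent about other new cycles. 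Second, on the genus: your Euler computation, like the paper's, only exhibits an embedding of $\Gamma'$ in a surface of genus $\gamma+1$, i.e., an upper bound. The obstacle you flag — showing that $v,w$ (and $v',w'$) lie in different faces of the truncated embedding — would at best prove that \emph{this particular} embedding of $\Gamma\setminus\{e,f\}$ cannot be extended without a handle; it does not exclude an unrelated embedding of $\Gamma'$ of genus $\gamma$, which is what ``the genus of $\Gamma'$ is one more'' requires. The paper's route to handle-necessity is different from yours (it argues that the merged cycles, having length $\equiv 2\bmod 4$, cannot be belts of an ETGCed graph, in the spirit of Theorem~\ref{fo}), but it shares exactly this logical gap about the minimum genus, so neither argument is complete on that point.
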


\begin{proof}
If $e$ and $f$ belong to a common belt of $\Gamma$, then $\Gamma'$ still has all its $\ell$-belts with $\ell\equiv 0 \mod 4$. Adjacent belts, i.e. those having at least an edge in common, have different vertex-color sequences, as in item 3 of Definition~\ref{sabado}, so they do not offer pairs of edges $e$ and $f$ as in the statement. If $e$ and $f$ separate pairs of belts $\{H_e,H'_e\}$ and $\{H_f,H'_f\}$, respectively, where $H_e,H'_e,H_f,H'_f$ are four different belts of $\Phi$, then the substitution of the pair $\{e,f\}$
with the pair $\{g,h\}$ transforms the pairs $\{H_e,H'_e\}$ and $\{H_f,H'_f\}$ into single cycles $H''_e$ and $H''_f$ of lengths $|H_e|+|H'_e|-2$ and $|H_f|+|H'_f|-2$, respectively, which are congruent to 2 mod 4. The new cycles $H''_e$ and $H''_f$ in $\Phi$ must be interpreted as the borders of a handle containing the new edges $g$ and $h$. With this, $\Gamma'$ is seen to have genus equal to genus of $\Gamma$ plus one.
\end{proof}

One can also reduce by one unit the genus of a $\Gamma$. For example, there are two horizontal edges colored $0\hspace*{2.2mm}^1_-\hspace*{2.2mm}2$ and $2\hspace*{2.2mm}^1_-\hspace*{2.2mm}0$ on the left of display~\ref{tess} with $0\hspace*{2.2mm}^1_-\hspace*{2.2mm}2$ repeated on top and bottom. By replacing them with a pair of vertical edges on the same endvertex quadruple, we pass from the original toroidal graph into a planar graph again, both of them ETCed.

\begin{corollary}\label{tudo} For $1<h\in\mathbb{Z}$,
the procedure of Theorem~\ref{tu} is applicable iteratively starting at any adequately large cutout $\Phi$ of a planar, or normal cutout $\Phi$ of a toroidal, cubic graph $\Gamma_0$ of girth 4, with $h$ pairs of edges separating pairwise different belt pairs, each such pair having its two edges sharing a vertex-edge color disposition. The resulting iteration  generates a sequence $\Gamma_0,\Gamma_1,\Gamma_2,\ldots,\Gamma_h$ of cubic graphs $\Gamma_i$ of girth 4 with two different ETGCs on a common TC, where the genus of each $\Gamma_i$ is one larger than the genus of $\Gamma_{i-1}$, for $0<i\le h$. Moreover, the prism $P_2\square\Gamma_i$ has an EGC based on those two ETGCs.
\end{corollary}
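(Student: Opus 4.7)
The plan is to apply Theorem~\ref{tu} $h$ times in succession. Fix in $\Phi$ the $h$ pairs of edges $\{e_i,f_i\}$ ($i=1,\ldots,h$) given by hypothesis: the two edges of each pair share the same vertex-and-edge color disposition under $\Psi$, and each pair separates a quadruple of belts $\{H_{e_i},H'_{e_i},H_{f_i},H'_{f_i}\}$ with all $h$ such quadruples pairwise disjoint (this is what the phrase \emph{pairwise different belt pairs} is taken to mean). By Theorem~\ref{tt}, $\Psi$ may be chosen so that in fact $\Gamma_0$ carries two orthogonal ETGCs $\Psi',\Psi''$ on the common vertex coloring of $\Psi$ and differing in color on every edge.

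For the induction, at step $i\ge 1$ I would apply Theorem~\ref{tu} to the pair $\{e_i,f_i\}$ inside $\Gamma_{i-1}$, obtaining $\Gamma_i=(\Gamma_{i-1}\setminus\{e_i,f_i\})\cup\{g_i,h_i\}$. By that theorem, $\Gamma_i$ is a cubic graph of girth 4 admitting an ETGC extending $\Psi$ on $\Gamma_{i-1}\setminus\{e_i,f_i\}$ with $\Psi(g_i)=\Psi(h_i)=\Psi(e_i)=\Psi(f_i)$; moreover, the two belt pairs $\{H_{e_i},H'_{e_i}\}$ and $\{H_{f_i},H'_{f_i}\}$ collapse into single cycles of lengths $\equiv 2 \pmod 4$, interpreted as the boundaries of a new handle, so the genus of $\Gamma_i$ is one more than that of $\Gamma_{i-1}$.

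The crucial point for iteration is that the hypotheses of Theorem~\ref{tu} survive for the remaining pairs $\{e_j,f_j\}$, $j>i$. Because the belt quadruples were chosen pairwise disjoint, the modification at step $i$ affects only belts incident with $e_i$ or $f_i$; the four belts containing $e_j$ or $f_j$ are unchanged as cycles in $\Gamma_i$, as are the incidences of $e_j$ and $f_j$ with their endpoints. Since Theorem~\ref{tu} preserves the entire vertex coloring and all edge colors outside $\{e_i,f_i\}$, the vertex-and-edge color pattern at $\{e_j,f_j\}$ is intact in $\Gamma_i$, and the separation hypothesis carries over. By induction on $i$ the full sequence $\Gamma_0,\Gamma_1,\ldots,\Gamma_h$ is produced, with strictly increasing genus. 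For the prism statement, the same swap $\{e_i,f_i\}\to\{g_i,h_i\}$ applied simultaneously inside $\Psi'$ and $\Psi''$, each giving its new edge pair the color inherited from the corresponding old pair in that coloring, yields two ETGCs of $\Gamma_i$ on the common vertex coloring; they continue to differ on every edge, since they differed on $\{e_i,f_i\}$ and hence are assigned different colors on $\{g_i,h_i\}$. As in Theorem~\ref{tt}, coloring the prism edges $(v,0)(v,1)$ by the shared vertex color and using $\Psi'$ on one copy of $\Gamma_i$ and $\Psi''$ on the other produces the required EGC of $P_2\square\Gamma_i$.

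The main obstacle I expect is verifying that the swap-induced edge colorings really are two orthogonal ETGCs of $\Gamma_i$, rather than merely TCs. This reduces to checking property (b) of Definition~\ref{ahora} on the two new edges and to checking the VEGC condition of Definition~\ref{hoy} on the single 4-belt $(v,v',w',w)$ formed by $g_i,h_i$ together with whatever edges of $\Gamma_i$ close it; but the common vertex coloring forces the same four colors on this 4-cycle's vertices as on the old 4-cycle on the same vertex quadruple, and the forced-propagation mechanism of Theorem~\ref{tt} (items 1--3 of Definition~\ref{sabado}) then dictates the edge colors uniquely in each of $\Psi',\Psi''$, matching the assignments $\Psi'(g_i)=\Psi'(e_i)$ and $\Psi''(g_i)=\Psi''(e_i)$ prescribed above. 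Hence the ETGC property and the orthogonality are preserved through the whole chain of $h$ swaps.
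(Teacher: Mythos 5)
Your proposal is correct and follows essentially the same route as the paper, which simply states that the result is straightforward by iterating the cycle exchanges of Theorem~\ref{tu}, with the prism assertion handled by an argument like that of Theorem~\ref{starting}. You supply considerably more detail than the paper does (in particular the disjointness of the belt quadruples ensuring the induction hypothesis survives each exchange, and the preservation of orthogonality on the swapped edges), but the underlying argument is the same.
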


\begin{proof}
Straightforward, based on the iteration of cycle exchanges, with the last assertion based on an argument similar to that of Theorem~\ref{starting}.
\end{proof}

\begin{conjecture}\label{con1} Let $\Gamma$ be a finite connected simple cubic graph of girth 4. Then every ETC of
$\Gamma$ is obtained via Corollary~\ref{tudo} departing from $Q_3$ (as in Theorem~\ref{t1}) via the following four constructive operations:
periodic extensions (Definitions~\ref{pe} and~\ref{pets}),
accordion unfoldings (Definitions~\ref{unfold} and~\ref{reyes}), cycle exchanges (Remark~\ref{remark}) and
ETCings (Definition~\ref{sabado}). 
\end{conjecture}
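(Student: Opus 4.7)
The plan is a strong induction on $|V(\Gamma)|$ (with a secondary parameter for the genus of the embedding surface), using the four operations of Corollary~\ref{tudo} as the alphabet of construction. The base case is $|V(\Gamma)|=8$: by Theorem~\ref{fo}, every ETCed cubic graph of girth $4$ has $|V|\equiv 0\pmod{4}$ and only $\ell$-belts with $\ell\equiv 0 \pmod{4}$; of the cubic graphs on at most $8$ vertices, $K_4$ has girth $3$ and $K_{3,3}$ has $|V|=6\not\equiv 0\pmod{4}$, so only $Q_3$ survives. Theorem~\ref{t1} supplies both orthogonal ETGCs of $Q_3$, closing the base.

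For the inductive step, given an ETCed $\Gamma$ with $|V(\Gamma)|>8$, I would isolate a reducible feature, apply the inverse of one of the three structural operations to obtain a smaller or lower-genus ETCed graph $\Gamma''$, invoke the hypothesis on $\Gamma''$, and recompose by the forward operation. Concretely, three types of reduction mirror the structural operations: (A) \emph{inverse accordion unfolding}, contracting a strip $P_2\square P_{2\ell}$ ($\ell>1$) in a cutout $\Phi$ of $\Gamma$ back to $P_2\square P_2$; (B) \emph{inverse periodic extension}, deleting one full period of $\Phi$; and (C) \emph{inverse cycle exchange}, replacing a pair of non-adjacent edges with matching vertex-edge color pattern (lying on handle borders, as in Theorem~\ref{tu}) by the complementary pair of an auxiliary $K_2\square K_2$ so as to lower the genus. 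The periodicity of ETCing along each belt (Definition~\ref{sabado}(3)), combined with Theorem~\ref{tt}, guarantees that each reduction preserves the ETC property and that the colored $\Gamma$ is uniquely recovered by applying the forward operation to the colored $\Gamma''$.

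The principal obstacle is the \emph{structural dichotomy}: every ETCed cubic graph $\Gamma$ of girth $4$ with $|V(\Gamma)|>8$ or of positive genus must admit at least one of the reducible features (A)--(C). Intuitively, the girth-$4$ local structure together with the arithmetic constraint $\ell\equiv 0 \pmod{4}$ on every belt and the rigid periodic coloring forced along each belt should guarantee enough repetition in the cutout that a sufficiently large $\Gamma$ must exhibit either a repeated colored pattern (giving (A) or (B)) or, when the genus is positive, a matched pair of handle-crossing edges (giving (C)). A pigeonhole argument over the finite set of vertex-edge color patterns available along a belt, combined with a case analysis of how adjacent belts join across handles, looks like the right tool. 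Quantifying precisely how long a belt, or how large a cutout, must be before a reducible feature is forced — and carefully ruling out small sporadic configurations, in particular when several handles interact — is where I expect the real effort to lie.

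Once the structural dichotomy is in hand, the induction closes immediately: reduce $\Gamma$ to $\Gamma''$ by the appropriate inverse operation, apply the hypothesis to express $\Gamma''$ as a sequence of the four operations from $Q_3$, append the corresponding forward operation, and use Theorem~\ref{tt} to conclude that the resulting ETC on $\Gamma$ coincides with the given one (up to the two-element orthogonal choice of Theorem~\ref{t1}). The uniqueness part of Theorem~\ref{tt} — that initializing ETCing on a single 4-belt forces the whole coloring — is what makes the bookkeeping of colors across the inductive step tractable, and what ultimately ties the four structural operations to the (conjecturally) exhaustive list of ETCs.
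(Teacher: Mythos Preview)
The statement you are attempting to prove is labeled a \emph{Conjecture} in the paper, and the paper offers no proof of it whatsoever; it is left open. There is therefore no ``paper's own proof'' against which your proposal can be compared.

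Your proposal itself does not constitute a proof either, and you candidly say so: the entire argument hinges on what you call the \emph{structural dichotomy} --- that every ETCed cubic graph of girth $4$ with more than $8$ vertices or positive genus must exhibit at least one of the reducible features (A)--(C) --- and you explicitly identify this as ``the principal obstacle'' where ``the real effort'' lies, offering only a heuristic pigeonhole sketch. That is precisely the content of the conjecture. Without it, the inductive scheme is vacuous: the base case and the recomposition step are routine, but the reduction step is exactly what is not known. So the gap is not a technical oversight; it is the whole problem.

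A few smaller points also need attention before the framework could even be made rigorous. Theorem~\ref{fo} is stated only for \emph{plane} graphs, so you cannot invoke its belt-length conclusion for arbitrary $\Gamma$; the divisibility $|V(\Gamma)|\equiv 0\pmod 4$ does follow directly from the EDS partition, but the $\ell\equiv 0\pmod 4$ belt condition does not transfer automatically to higher genus. The paper's notion of cutout (Definitions~\ref{cutout} and~\ref{normal}) is defined only for planar and toroidal graphs, so your inductive use of ``a cutout $\Phi$ of $\Gamma$'' at arbitrary genus presupposes an extension that the paper does not supply. Finally, Theorem~\ref{tt} yields \emph{two} orthogonal ETCs over a common vertex coloring, not a unique ETC, so the ``uniqueness'' you appeal to for the color bookkeeping is a two-fold ambiguity, not a one-fold determination; this must be tracked through every reduction and recomposition.
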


\begin{example}
Three examples of Theorem~\ref{tu} were already presented in the upper right of display (\ref{oct6}), the right of display (\ref{cutout}) and the right of display (\ref{tessa}), in the context of examples~\ref{ex1}, \ref{ex2} and \ref{sondos}, respectively.
\end{example}

\subsection{Edge-partitions into 3-paths and into 3-stars}\label{chance}

\begin{theorem}
Each ETGC $\Phi$ of a cubic graph $\Gamma$ of girth 4 whose $\ell$-belts have $\ell\equiv 0 \mod 4$ and is either planar or toroidal insures an edge-partition of $\Gamma$ into $\frac{1}{2}|V(\Gamma)|$ paths $P_4$ of length 3. Moreover, the total number of such partitions is $6|V(\Gamma)|$.
\end{theorem}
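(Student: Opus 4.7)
My plan is to extract the $P_4$-partition from the ETGC $\Phi$ of $\Gamma$ by splitting the four colors into two complementary pairs and reading off the components of the two resulting bichromatic subgraphs; then I would count via the interaction between ETGCs and pairings.

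Let $\Phi$ have color set $[4]$, and for every $2$-subset $\{i,j\}\subset[4]$ with complement $\{k,\ell\}$ let $G_{ij}$ denote the spanning subgraph whose edge set consists of the edges colored $i$ or $j$. The ETC bijection $N[v]\leftrightarrow[4]$ of Definition~\ref{ahora}(a) forces the three edges incident to $v$ to carry the three colors in $[4]\setminus\{\Phi(v)\}$. Consequently $\deg_{G_{ij}}(v)=1$ when $\Phi(v)\in\{i,j\}$ and $\deg_{G_{ij}}(v)=2$ when $\Phi(v)\in\{k,\ell\}$, so $G_{ij}$ is a vertex-disjoint union of paths (with endpoints in $S_i\cup S_j$) together with possibly some even cycles lying entirely in $S_k\cup S_\ell$. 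Counting degrees gives $|S_i|+|S_j|=|V|/2$ endpoints, hence $|V|/4$ path components, while the edge count is $\tfrac12\bigl(|V|/2+2\cdot |V|/2\bigr)=3|V|/4$.

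The decisive step is to exclude cycles and thereby force every path to have length exactly $3$. Any putative cycle of $G_{ij}$ would be supported on vertices colored only from $\{k,\ell\}$; but the VEGC periodic color pattern of Definition~\ref{sabado}(3), applied along any $\ell$-belt with $\ell\equiv 0\pmod 4$, forces the four colors to cycle through in the prescribed periodic order on belt vertices, so no monochromatic-in-$\{k,\ell\}$ subcycle can embed inside the belt structure of $\Gamma$. Once cycles are excluded, the identity $|V|/4$ paths $\times$ average length $3=3|V|/4$ edges, together with the obvious lower bound length $\geq 1$, promotes the average to the uniform statement that every path has length $3$, i.e.\ is a $P_4$. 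Running the same argument for $G_{k\ell}$ gives another $|V|/4$ paths $P_4$, and since $E(G_{ij})\sqcup E(G_{k\ell})=E(\Gamma)$, the union is the desired edge-partition of $\Gamma$ into $|V|/2$ paths $P_4$.

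For the count $6|V(\Gamma)|$, each ETGC supplies three edge-partitions, one per unordered pairing of $[4]$ into two pairs. Using the forced propagation of ETCing from an arbitrary starting $4$-belt in Theorem~\ref{tt} and the iterated cycle-exchange construction of Corollary~\ref{tudo}, I would parametrize all ETGCs of $\Gamma$ by the choice of a base $4$-belt together with the finite color-label freedom of Definition~\ref{sabado}, and check by a double count (ETGC, pairing)~vs.~(partition, compatible pairing) that the number of partitions works out to $6|V(\Gamma)|$. The main obstacles are the length-$3$ uniformity in Step~2, where going from average-$3$ to every-component-a-$P_4$ relies crucially on the $\ell\equiv 0\pmod 4$ belt hypothesis coupled with the VEGC periodicity, and the bookkeeping of partition multiplicities in the final double count.
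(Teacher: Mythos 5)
Your construction of the partition---splitting $[4]$ into complementary pairs $\{i,j\}$ and $\{k,\ell\}$ and taking the components of the bichromatic edge subgraphs $G_{ij}$ and $G_{k\ell}$---is a genuinely different decomposition from the paper's, which instead follows vertices along the six pairs of $4$-color sequences such as $\{0123,1302\}$ and reads the paths off the vertex colors rather than the edge colors. Your degree computation, the component count $|V|/4$, and the edge count $3|V|/4$ are all correct. However, the decisive step fails as written: passing from ``average length $3$'' to ``every component is a $P_4$'' using only the \emph{lower} bound length $\geq 1$ is a non sequitur (an average of $3$ is compatible with one component of length $5$ and one of length $1$). What you need is the \emph{upper} bound length $\leq 3$, and it is available from efficient domination alone: the internal vertices of a path component of $G_{ij}$ lie in $S_k\cup S_\ell$ and, being properly colored, alternate between $k$ and $\ell$, so three or more of them would place two vertices of $S_k$ (or of $S_\ell$) at distance $2$, contradicting Definition~\ref{ahora}(b), which forces any two vertices of one color class to be at mutual distance at least $3$. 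Hence at most two internal vertices, and only then does the averaging argument force every component to be a $P_4$. The same one-line observation excludes cycles (an alternating $k,\ell$ cycle has length at least $4$ by the girth hypothesis and so contains two same-colored vertices at distance $2$), which makes your appeal to the VEGC periodicity of Definition~\ref{sabado}(3) both unnecessary and unreliable: that clause describes the ETCing algorithm, not a property guaranteed of an arbitrary ETGC.

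The second assertion is not proved in your write-up. Your scheme yields exactly three partitions per ETGC, one for each unordered pairing of $[4]$ into two pairs, and the concluding double count over (ETGC, pairing) versus (partition, compatible pairing) is only announced, not carried out; in particular no mechanism is exhibited that would produce the factor $|V(\Gamma)|$ in the claimed total $6|V(\Gamma)|$. The paper's enumeration instead runs over the six pairs of $4$-color vertex sequences, so the two counts do not even index the same families of partitions; as it stands, the ``moreover'' clause of the theorem remains unestablished in your proposal.
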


\begin{proof}
The color set $[4]$ yields pairs 
$\{0123,1302\}$, $\{0132,1203\}$, $\{0213,2301\}$, $\{0231,2103\}$, $\{0312,3201\}$ and $\{0321,3102\}$ of 4-color sequences. Each such pair 
splits $E(\Gamma)$ into $\frac{1}{2}|V(\Gamma)|$ paths, half respecting each color sequence. By following for example the vertices colored successively 0123 or 1302, distinguishing the resulting paths $A,B,C,D,\ldots$ etc., each $\Gamma$ as in the statement gets a partition as claimed. So, the 3-cube gets the cutout in display (\ref{octstars}):
\begin{eqnarray}\label{octstars}\begin{array}{l}
0\hspace*{2.2mm} _-^A\hspace*{2.2mm} 1\hspace*{2.2mm} _-^A\hspace*{2.2mm} 2\hspace*{2.2mm} _-^A\hspace*{2.2mm} 3\hspace*{2.2mm} _-^B\hspace*{2.2mm} 0\\
|_B\hspace*{5mm}|_D\hspace*{5mm}|_D\hspace*{5mm}|_B\hspace*{5mm}|_B\\
2\hspace*{2.2mm}_C^-\hspace*{2.2mm}3\hspace*{2.2mm} _D^-\hspace*{2.2mm}0\hspace*{2.2mm}_C^-\hspace*{2.2mm} 1\hspace*{2.2mm} _C^-\hspace*{2.2mm} 2
\end{array}\end{eqnarray} 
A similar treatment holds for any other cubic graph of girth 4 and $\ell$-belts with $\ell\equiv 0 \mod 4$, either planar or toroidal. \end{proof}

\begin{theorem}
Each ETGC $\Phi$ of a cubic graph $\Gamma$ of girth 4 whose $\ell$-belts have $\ell\equiv 0 \mod 4$ and is either planar or toroidal insures an edge-partition of $\Gamma$ into $\frac{1}{4}|V(\Gamma)|$ 3-stars $K_{1,3}$. Moreover, the total number of such partitions is 4 and the centers of the 3-stars of these partitions form a partition of $|V(\Gamma)|$ into the colors $0$, $1$, $2$ and $3$.
\end{theorem}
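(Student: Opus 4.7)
The plan is to read off the four partitions directly from the four color classes supplied by the ETGC. Recall from Definition~\ref{ahora}(b) that an ETGC of a cubic $\Gamma$ partitions $V(\Gamma)$ into four efficient dominating sets $S_0,S_1,S_2,S_3$, one for each color in $[4]$. Since $\Gamma$ is cubic, efficient domination forces $|S_i|=\tfrac14|V(\Gamma)|$ for each $i$, because every $v\notin S_i$ contributes $1$ to the count $\sum_{v\in V(\Gamma)\setminus S_i}|N[v]\cap S_i|=3|S_i|$, so $|V(\Gamma)|-|S_i|=3|S_i|$.

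Next, for a fixed color $i$, I would construct the partition by attaching to every $v\in S_i$ the $3$-star $K_{1,3}^{(v)}$ whose center is $v$ and whose leaves are the three neighbors of $v$ in $\Gamma$. Two things must be checked to see these stars form a genuine decomposition. First, the stars are vertex-disjoint: if $v,v'\in S_i$ are distinct, then $v\not\sim v'$ because $S_i$ is independent, and $N(v)\cap N(v')=\emptyset$ because each vertex of $V(\Gamma)\setminus S_i$ has $|N[u]\cap S_i|=1$, ruling out a common leaf. Second, the union of the vertex sets equals $V(\Gamma)$: the centers exhaust $S_i$, and every $u\notin S_i$ appears as the unique leaf incident to its unique $S_i$-neighbor. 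Consequently the $|S_i|=\tfrac14|V(\Gamma)|$ stars form the announced partition, and the $3|S_i|=\tfrac34|V(\Gamma)|$ edges they use are precisely the edges of the cut $[S_i,V(\Gamma)\setminus S_i]$.

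Varying $i$ over $\{0,1,2,3\}$ yields four such partitions, so the total count is $4$. Their centers are exactly $S_0,S_1,S_2,S_3$, which by Definition~\ref{ahora}(b) form a partition of $V(\Gamma)$ indexed by the four colors, proving the last clause. Uniqueness of the count $4$ comes from the observation that in a cubic graph, a $K_{1,3}$-partition of the vertex set is determined by its set of centers, and any such center set must be an independent set in which every non-center vertex has exactly one neighbor, i.e.\ an EDS; Definition~\ref{ahora}(b) gives exactly the four EDSs $S_0,\dots,S_3$, so no further partitions exist.

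The only genuine subtlety, which I expect to be the main (minor) obstacle, is confirming that the four EDSs are the \emph{only} EDSs of $\Gamma$, so that no further star-partition is counted. This follows because the girth-$4$ hypothesis, together with $\ell\equiv 0\bmod 4$ on every belt (Theorem~\ref{fo}), forces the colors around every $4$-belt to cycle through $[4]$ as in the ETCing rules of Definition~\ref{sabado}, which in turn propagate the color of a single center to the entire graph; hence any EDS coincides with one of $S_0,\dots,S_3$. With this in hand, the count of $4$ partitions and the color-indexed partition of $V(\Gamma)$ by centers are immediate.
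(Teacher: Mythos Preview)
Your core argument---that each color class $S_i$ of the ETGC is an efficient dominating set of size $\tfrac14|V(\Gamma)|$, and that the closed neighborhoods of the vertices in $S_i$ give $\tfrac14|V(\Gamma)|$ vertex-disjoint copies of $K_{1,3}$ covering $V(\Gamma)$---is correct and is exactly what the paper intends; its entire proof reads ``This is direct conclusion of item (b) of Definition~\ref{ahora}.''

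Your last paragraph, however, over-reaches. The paper's phrase ``the total number of such partitions is $4$'' refers to the four star systems produced by the four color classes $S_0,\dots,S_3$ of the given ETGC, not to a claim that $\Gamma$ admits no other efficient dominating sets whatsoever. Your proposed justification---that the ETCing rules ``propagate the color of a single center to the entire graph; hence any EDS coincides with one of $S_0,\dots,S_3$''---conflates the fixed coloring with the purely structural notion of an EDS. A propagation argument shows at most that the \emph{coloring} is determined once a $4$-belt is colored; it says nothing about whether $\Gamma$ might possess further EDSs that happen not to be monochromatic under this particular ETGC. The paper neither states nor needs such a uniqueness result, so you should simply omit that paragraph; the first three paragraphs already deliver everything the theorem (and the paper's one-line proof) actually asserts.
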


\begin{proof}
This is direct conclusion of item (b) of Definition~\ref{ahora}.
\end{proof}

\subsection{4-regular graphs of girth 5 with TCs but not ETCs}\label{7}

\begin{theorem}\label{Gamma}
The edge-disjoint union $\Gamma=Pet^2$ of two pentagons $P_0=(v_0v_4v_8v_{12}v_{16})$ and $P_1=(v_2v_6v_{10}v_{14}v_{18})$, two pentagrams $Q_0=(v_1v_5v_9v_{13}v_{17})$ and $Q_1=(v_3v_7v_{11}v_{15}v_{19})$ and the Hamilton cycle $\Sigma=(v_0v_1v_2\cdots v_{17}v_{18}v_{19})$ is a 20-vertex 4-regular graph of girth 5 with a TC that is not efficient. Moreover, $E(\Sigma)$ decomposes as:  
$$\begin{array}{cc}F_0^0=\{v_0v_1,v_4v_5,v_8v_9,v_{12}v_{13},v_{16}v_{17}\},&
F_0^1=\{v_0v_{19},v_4v_3,v_8v_7,v_{12}v_{11},v_{16}v_{15}\},\\
F_1^0=\{v_2v_1,v_6v_5,v_{10}v_9,v_{14}v_{13},v_{18}v_{17}\},&
F_1^1=\{v_2v_3,v_6v_7,v_{10}v_{11},v_{14}v_{15},v_{18}v_{19}\},\end{array}$$
so that $P_i\cup Q_j\cup F_i^j$ is a copy of $Pet$ in $\Gamma$, for $i=0,1$ and $j=0,1$.
$\Gamma$ contains exactly 54 5-cycles, but the TC has just 14 5-cycles with their vertex and edge sets in bijective correspondence with the color set $[5]$. The remaining 40 5-cycles do not have such bijective correspondence.
\end{theorem}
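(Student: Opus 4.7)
The plan proceeds in three phases: routine structural verification, construction of a non-efficient TC, and the careful enumeration of 5-cycles.

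\emph{Structure and Petersen decomposition.} Each vertex $v_i$ contributes degree 2 from $\Sigma$ and degree 2 from the unique member of $\{P_0,P_1,Q_0,Q_1\}$ selected by $i\bmod 4$, giving 4-regularity. The $P_i$ and $Q_j$ realize girth $\leq 5$; to rule out shorter cycles I intersect neighborhoods $N(v)\cap N(v')$ up to the $\mathbb{Z}_{20}$-rotation symmetry of $\Gamma$. The key point is that each $Q_j$ is a \emph{pentagram}, whose edges have the form $v_{4k+2j+1}v_{4(k+2)+2j+1}$ with differences $\pm 8\pmod{20}$; this is exactly what prevents the 4-cycle $v_i v_{i+1} v_{i+5} v_{i+4} v_i$ from closing up as it would if $Q_j$ were an ordinary pentagon on the same vertex set. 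Each Hamilton edge $v_k v_{k+1}$ joins some $P_i$ to some $Q_j$ with $(i,j)$ determined by $k\bmod 4$, so the $F_i^j$ partition $E(\Sigma)$ into four 5-edge sets. The bijection $v_{4k+2i}\leftrightarrow$ outer vertex $k$, $v_{4k+2j+1}\leftrightarrow$ inner vertex $k$ identifies each $P_i\cup Q_j\cup F_i^j$ with the Petersen graph.

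\emph{A non-efficient TC.} I exhibit a rotation-equivariant TC $c\colon V(\Gamma)\cup E(\Gamma)\to[5]$. For vertices, $c(v_i)=i\bmod 5$ is proper because the edge differences of $\Gamma$ lie in $\{\pm 1,\pm 4,\pm 8\}\pmod{20}$, none congruent to $0\bmod 5$. On edges I extend $\mathbb{Z}_{20}$-equivariantly within each of the five edge families $\Sigma,P_0,P_1,Q_0,Q_1$, so propriety reduces to a finite check at one orbit representative per family. Non-efficiency is witnessed by the color class $S_0=\{v_0,v_5,v_{10},v_{15}\}$: one computes $N[v_2]=\{v_1,v_2,v_3,v_6,v_{18}\}$, which is disjoint from $S_0$, so $|N[v_2]\cap S_0|=0\neq 1$ and $S_0$ is not an EDS; thus $c$ is a TC that is not efficient.

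\emph{The 54 five-cycles and the $14/40$ split.} I classify 5-cycles by their edge distribution across the five families. Every 5-cycle either lies within some Petersen subgraph $P_i\cup Q_j\cup F_i^j$ (the Petersen graph has 12 five-cycles, and summing over the 4 subgraphs while accounting for the 4 pure cycles $P_i,Q_j$ that are each shared by two subgraphs yields $4+40=44$), or it uses Hamilton edges from more than one $F_i^j$, in which case it is forced to have the form $v_{2m}v_{2m+1}v_{2m+2}v_{2m+3}v_{2m+4}v_{2m}$ for some $m\in\{0,1,\dots,9\}$ (4 consecutive Hamilton edges closed by a pentagon edge), contributing 10 more. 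The total is $44+10=54$. To check the bijection property, note that the 4 pure cycles and the 10 ``long'' cycles have their 5 vertex indices forming a complete residue system mod 5 (for the long cycles, they are 5 consecutive integers). For each of the 40 mixed Petersen 5-cycles, in contrast, one verifies that the two matching edges in the cycle force a pair of vertices with indices differing by 5 modulo 20, hence repeating a color under $c$ and failing the vertex bijection. With the rotation-equivariant edge coloring arranged to respect the bijection on the 14 symmetric cycles, exactly 14 cycles realize the bijection onto $[5]$ on both vertices and edges, while all 40 mixed Petersen cycles fail. The principal obstacle is this final enumeration and verification: the $\mathbb{Z}_{20}$-symmetry of both $\Gamma$ and $c$ is the decisive tool that reduces the 54-case analysis to a few orbit representatives and exposes the index-difference-5 obstruction that forces all 40 Petersen mixed cycles into the non-bijective class.
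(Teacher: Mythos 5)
Your proposal follows essentially the same route as the paper: exhibit an explicit five-color TC, certify non-efficiency by producing one undominated vertex for one color class, and classify the 54 five-cycles into the four pure cycles, the ten cycles $v_{2m}v_{2m+1}v_{2m+2}v_{2m+3}v_{2m+4}$, and the forty mixed ones. Two of your refinements are genuine improvements on the paper's inspection-based argument: obtaining the count 54 from the twelve $5$-cycles of each Petersen subgraph ($4\cdot 12$ corrected for the doubly counted $P_i,Q_j$, plus the ten cycles straddling several $F_i^j$), and, above all, the observation that every mixed cycle contains two vertices whose indices are congruent modulo $5$, so that under any vertex coloring of the form $v_i\mapsto ai+b\bmod 5$ the forty mixed cycles automatically fail the vertex bijection. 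This gives a clean proof of the ``at most $14$'' half that the paper only asserts by looking at its figure.

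Two points need repair. First, $\Gamma$ is not invariant under the rotation $v_i\mapsto v_{i+1}$: that map sends the pentagon edge $v_0v_4$ to the non-edge $v_1v_5$, since the pentagrams use differences $\pm 8$ rather than $\pm 4$ (the very fact you exploit to kill the $4$-cycles). The usable affine automorphisms are generated by $v_i\mapsto v_{i+2}$ and $v_i\mapsto v_{-i}$, so each of your ``one orbit representative per family'' reductions (girth check, propriety of the coloring, the cycle census) must be run over correspondingly more representatives; this is harmless but must be stated correctly, and as written the three appeals to $\mathbb{Z}_{20}$-symmetry are appeals to a symmetry the graph does not have. Second, and more substantively, the edge coloring is never written down: the phrase ``arranged to respect the bijection on the 14 symmetric cycles'' is precisely the content of the ``exactly $14$'' half of the statement, and it is not automatic that a proper total coloring extending $c(v_i)=i\bmod 5$ exists that is edge-bijective on all fourteen candidate cycles simultaneously. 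The paper discharges this by exhibiting the coloring explicitly in displays (\ref{ve}) and (\ref{pq}); you need to do the same, e.g.\ by listing the colors of the ten $\Sigma$-edges $e_0,\dots,e_9$ and of one edge in each of $P_0,P_1,Q_0,Q_1$, extending equivariantly under $v_i\mapsto v_{i+2}$ composed with a color shift, and then verifying propriety at $v_0$ and $v_1$ and the edge bijection on one pentagon, one pentagram, and one long cycle. Likewise your claim that every $5$-cycle using $\Sigma$-edges from more than one $F_i^j$ is forced to be one of the ten long cycles deserves the short difference-sum argument (five signed differences from $\{\pm1,\pm4,\pm8\}$ summing to $0\bmod 20$), which also settles the girth claim; as written it is asserted rather than proved.
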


\begin{figure}[htp]
\includegraphics[scale=0.882]{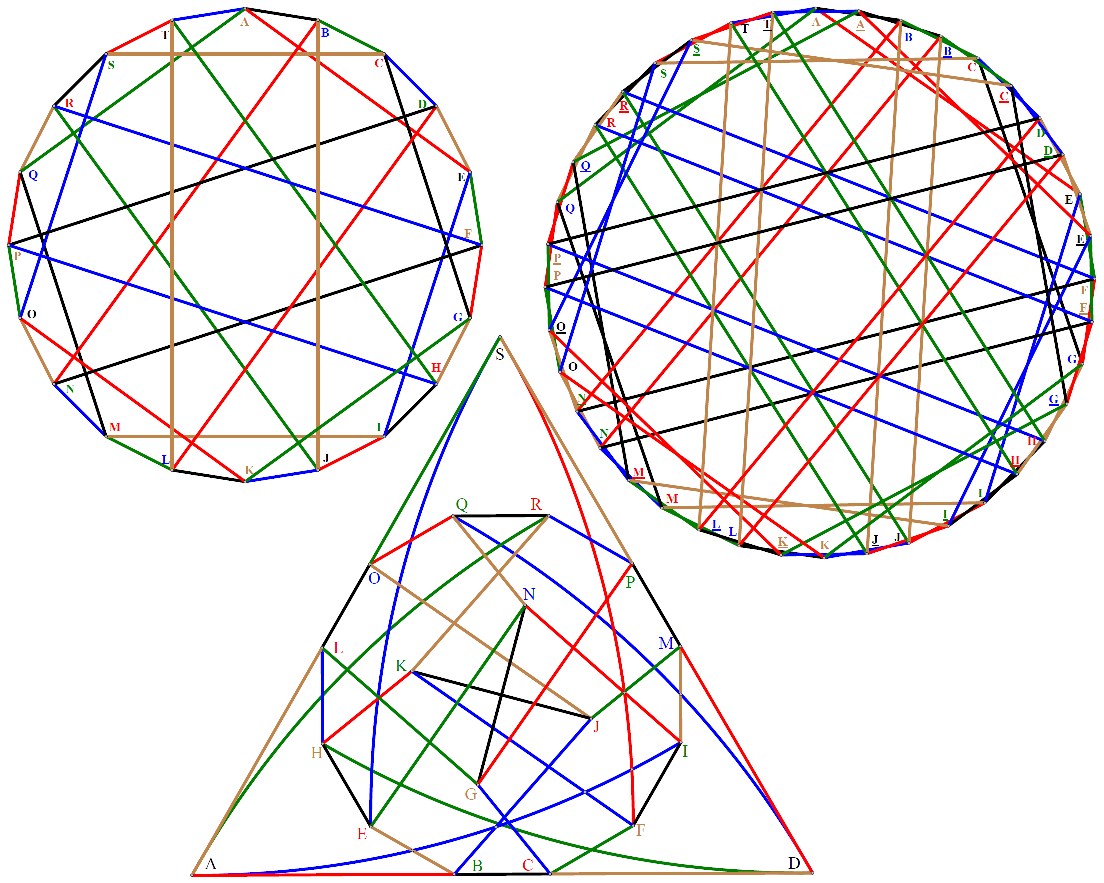}
\caption{TCs of 20-, 40- and 19-vertex 4-regular graphs of girth 5.}
\label{penta20}
\end{figure}

\begin{proof} The graph $\Gamma$ is mentioned in \cite[Section 4]{AFLN}. Figure~\ref{penta20} depicts $\Gamma$ bearing a TC which is not efficient. In the figure, the vertices of $\Gamma$ are represented with the capital letters from A to T, that we number correspondingly as vertices $v_0,v_1,\ldots, v_{19}$. The letters representing these 20 vertices and the edges of $\Gamma$ are colored in the figure with red=0, blue=1, hazel=2, black=3 and green=4, defining a TC of $\Gamma$. We will see that $\Gamma$ contains exactly 54 5-cycles, but the TC in Figure~\ref{penta20} has just 14 5-cycles with their vertex and edge sets in bijective correspondence with the color set $[5]$. The remaining 40 5-cycles do not have such bijective correspondence. 

$\Gamma$ contains four pairwise disjoint special 5-cycles represented in the figure via two regular pentagons (a central one, say $P_1$, with its interior in light-gray, and an external one, say $P_0$) 
and two regular pentagrams (one of them, say $Q_0$, with its isosceles triangles in yellow, and the other one, say $Q_1$, obtained by symmetry from $Q_0$ about the vertical line passing through vertex A=$v_0$). Seen as 5-cycles of $\Gamma$, we have that 

$P_0=($AEIMQ$)=(v_0v_4v_8v_{12}v_{16})$, $P_1=($CGKOS$)=(v_2v_6v_{10}v_{14}v_{18})$, 

$Q_0=($BJRFN$)=(v_1v_9v_{17}v_5v_{13})$, $Q_1=($THPDL$)=(v_{19}v_7v_{15}v_3v_{11})$.

\noindent The four copies  
of $Pet$ are obtained as subgraphs of $\Gamma$ by joining respectively:
\begin{enumerate}
\item $P_0$ and $Q_0$ via 1-factor 
$F_0^0=\{$AB,EF,IJ,MN,QR$\}=\{v_0v_1,v_4v_5,v_8v_9,v_{12}v_{13},v_{16}v_{17}\}$;
\item $P_0$ and $Q_1$ via 1-factor
$F_0^1=\{$AT,ED,IH,ML,QP$\}=\{v_0v_{19},v_4v_3,v_8v_7,v_{12}v_{11},v_{16}v_{15}\}$;
\item $P_1$ and $Q_0$ via 1-factor
$F_1^0=\{$CB,GF,KJ,ON,SR$\}=\{v_2v_1,v_6v_5,v_{10}v_9,v_{14}v_{13},v_{18}v_{17}\}$;
\item $P_1$ and $Q_1$ via 1-factor
$F_1^1=\{$CD,GH,KL,OP,ST$\}=\{v_2v_3,v_6v_7,v_{10}v_{11},v_{14}v_{15},v_{18}v_{19}\}$.
\end{enumerate}

Apart from $P_0,P_1,Q_0,Q_1$, there are other 50 5-cycles in $\Gamma$, yielding a total of 54.
There are three {\it types} of edges of $\Gamma$, namely those belonging exactly to:
\begin{enumerate}
\item  eight 5-cycles, a type formed by the 10 edges in $P_0\cup P_1$;
\item seven 5-cycles, a type formed by the 10 edges in $Q_0\cup Q_1$;
\item five 5-cycles, a type comprising the 20 edges of $\Gamma$ not in $P_0\cup P_1\cup Q_0\cup Q_1$.  
\end{enumerate}
$\Gamma$ is represented in the upper-left of Figure~\ref{penta20}.
The 20-cycle $\Sigma=$(AB$\cdots$ST)$=$ 
$(v_0v_1\cdots v_{18}v_{19})$ containing all the edges of $\Gamma$ not in $P_0\cup P_1\cup Q_0\cup Q_1$, contains exactly 20 paths of length  2 each that alternatively belong exactly to one 5-cycle and to two 5-cycles, respectively in the following two sets of ten paths each:
\begin{eqnarray}\begin{array}{l}\label{almadraba}
\{\mbox{TAB,BCD,DEF,FGH,HIJ,JKL,LMN,NOP,PQR,RST}\}=\{v_{19}v_0v_1,\ldots,v_{17}v_{18}v_{19}\};\\  
\{\mbox{ABC,CDE,EFG,GHI,IJK,KLM,MNO,OPQ,QRS,STA}\}=\{v_0v_1v_2,\ldots,v_{18}v_{19}v_0\}.
\end{array}\end{eqnarray}
We distinguish four  {\it types} of 5-cycles in $\Gamma$, namely:

\begin{enumerate}
\item $P_0$, $P_1$, $Q_0$ and $Q_1$;
\item 10 5-cycles with 1 edge $v_{2i}v_{2i+4}$ in $P_0\cup P_1$ and 1 path $v_{2i}v_{2i+1}v_{2i+2}v_{2i+3}v_{2i+4}$, $i\in\mathbb{Z}_{20}$;
\item 20 5-cycles with 1 path of length 2 in $P_0\cup P_1$, 1 edge in $Q_0\cup Q_1$ and 2 edges in $\Sigma$;
\item 20 5-cycles with 1 path of length 2 in $Q_0\cup Q_1$, 1 edge in $P_0\cup P_1$ and 2 edges in $\Sigma$.
\end{enumerate}f
The 5-cycles of types 1, 2, 3 and 4 are listed in the first, second, third-fourth and fifth-sixth columns in the lower part of Figure~\ref{penta20}, respectively. 
In the representation of $\Gamma$ in Figure~\ref{penta20}, each 5-cycle of types 1 and 2 has its vertex set and its edge set each in bijective correspondence with the color set, which is not the case for the 5-cycles of types 3 and 4; for help, see displays (\ref{ve})-(\ref{pq}) below. In addition, each path of length 2 in display (\ref{almadraba}) belongs to one 5-cycle or two 5-cycles of type 2.

Each edge in:
\begin{enumerate}
\item $P_0\cup P_1$. has its 8 5-cycles being: 1 each of types 1 and 2, 4 of type 3 and 2 of type 4;
\item $Q_0\cup Q_1$ has its 7 5-cycles being: 1 of type 1, 2 of type 3 and 4 of type 4;
\item $\Sigma$ has its 5 5-cycles being: 2 of type 2, 2 of type 3 and 1 of type 4. 
\end{enumerate}

\noindent In the upper-left of Figure~\ref{penta20}, we have the following correspondences from the successive elements of the 20-cycle $\Sigma$ onto the color set $[5]$, where the assignment of the colors appears as an exponent or super-index in the case of each vertex $v_i$ and each edge $e_i=v_iv_{i+1}$, with subindex $i$ mod 20:

\begin{eqnarray}\begin{array}{l}\label{ve}
V(\Sigma)=(v_0^2v_1^1v_2^0v_3^4v_4^3v_5^2v_6^1v_7^0v_8^4v_9^3v_{10}^2v_{11}^1v_{12}^0v_{13}^4v_{14}^3v_{15}^2v_{16}^1v_{17}^0v_{18}^4v_{19}^3),\\
E(\Sigma)=e_0^3e_1^4e_2^1e_3^2e_4^4e_5^0e_6^2e_7^3e_8^0e_9^1e_{10}^3e_{11}^4e_{12}^1e_{13}^2e_{14}^4e_{15}^0e_{16}^2e_{17}^3e_{18}^0e_{19}^1).
\end{array}\end{eqnarray}
Using such notation, we also have the coloring restricted to $P_0\cup P_1\cup Q_0\cup Q_1$, by inserting the corresponding colors between each two adjacent vertices:
\begin{eqnarray}\begin{array}{l}\label{pq}
P_0=(v_0^20v_4^31v_8^42v_{12}^03v_{16}^14),\;
P_1=(v_2^03v_6^14v_{10}^20v_{14}^31v_{18}^42),\\
Q_0=(v_1^12v_9^34v_{17}^01v_5^23v_{13}^40),\;Q_1=(v_3^40v_{11}^12v_{19}^34v_7^01v_{15}^23).\\
\end{array}\end{eqnarray}
\end{proof}

\begin{proposition}
A double cover $\Lambda$ of the graph $\Gamma$ of Theorem~\ref{Gamma} exists in which the four copies of $Pet$ in $\Gamma$ are covered by corresponding copies of $Dod$ in $\Lambda$. In fact, $\Lambda$ is a 40-vertex 4-regular graph of girth 5 and 64 5-cycles. Moreover, $\Lambda$ has a total non-efficient coloring containing just 24 5-cycles with their vertex and edge sets in bijective correspondence with the color set $[5]$. The remaining 40 5-cycles do not have such bijective correspondence.
\end{proposition}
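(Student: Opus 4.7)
The plan is to construct $\Lambda$ explicitly as a $2$-fold $\mathbb{Z}/2$-voltage cover of $\Gamma$, verify its structural properties, enumerate its $5$-cycles via lifting, and then transfer the TC of $\Gamma$ from Theorem~\ref{Gamma}. Writing $E(\Gamma)=E(P_0)\cup E(P_1)\cup E(Q_0)\cup E(Q_1)\cup E(\Sigma)$ with $E(\Sigma)=F_0^0\cup F_0^1\cup F_1^0\cup F_1^1$, I would assign $\mathbb{Z}/2$-voltages so that the restriction to each of the four Petersen subgraphs $P_i\cup Q_j\cup F_i^j$ realizes the $2$-fold cover $Dod\to Pet$. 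This forces each pentagon $P_i$ and each pentagram $Q_j$ to have voltage-sum $1$, so that each lifts to a single Hamiltonian $10$-cycle of the covering $Dod$, and it guarantees that the preimage of each Petersen subgraph of $\Gamma$ is the asserted copy of $Dod$. The equalities $|V(\Lambda)|=40$ and the $4$-regularity of $\Lambda$ are then automatic from the voltage-graph construction.

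The girth condition follows from the standard observation that every cycle of $\Lambda$ projects onto a cycle of $\Gamma$ of the same length; since $\Gamma$ has girth $5$, so does $\Lambda$. For the $5$-cycle count, the voltage criterion tells us that the number of $5$-cycles of $\Lambda$ is twice the number of $5$-cycles $C$ of $\Gamma$ whose edge-voltages sum to $0$. I would organize the count by the four types of $5$-cycles classified in the proof of Theorem~\ref{Gamma}: under the voltage assignment fixed above, the $4$ cycles of type~1 all lift nontrivially, and a case analysis on the $10$, $20$, $20$ cycles of types~2, 3, 4 should yield a total of $32$ trivially-lifting cycles, giving $2\cdot 32=64$ $5$-cycles in $\Lambda$.

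The TC of $\Lambda$ is then defined as the pull-back of the TC of $\Gamma$ from Theorem~\ref{Gamma}: each vertex and each edge of $\Lambda$ inherits the color of its projection. Properness is immediate, and non-efficiency transfers from $\Gamma$ to $\Lambda$ because the pullback of a color class that fails to be an EDS still fails to be an EDS. A $5$-cycle of $\Lambda$ has its vertex and edge sets in bijective correspondence with $[5]$ if and only if its projection in $\Gamma$ does, so the $64$ $5$-cycles of $\Lambda$ split into $24$ good and $40$ bad exactly when, among the $32$ trivially-lifting $5$-cycles of $\Gamma$, there are exactly $12$ good (lifting to $2\cdot 12 = 24$) and $20$ bad (lifting to $2\cdot 20 = 40$); I would verify this split by cross-referencing the list of $14$ good $5$-cycles identified in Theorem~\ref{Gamma} with the trivially-lifting ones produced above.

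The main obstacle is the explicit bookkeeping: one must exhibit a single $\mathbb{Z}/2$-voltage on $E(\Gamma)$ that simultaneously realizes each of the four $Dod\to Pet$ covers and produces a $32$--$22$ split of trivial- versus nontrivial-monodromy $5$-cycles, with the trivial-monodromy ones distributed as $12$ good and $20$ bad. Once this is pinned down, the verification of all remaining claims is a finite, albeit somewhat intricate, enumeration.
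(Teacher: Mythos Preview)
Your voltage-graph framework is the right one, and it matches the paper's implicit construction. However, there is a concrete error that, if left uncorrected, makes the counts come out wrong.

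You assert that realizing each $P_i\cup Q_j\cup F_i^j$ as a $Dod\to Pet$ cover ``forces each pentagon $P_i$ and each pentagram $Q_j$ to have voltage-sum $1$''. This is false. Take the standard cover $G(10,2)\to G(5,2)$: the outer pentagon of $Pet$ lifts to the outer $10$-cycle of $Dod$ (voltage-sum $1$), but the inner pentagram lifts to the two disjoint inner pentagons of $Dod$ (voltage-sum $0$). In any $Dod\to Pet$ double cover, exactly one of $\{$pentagon, pentagram$\}$ has voltage-sum $1$; if both had voltage-sum $1$ you would instead obtain the Desargues graph $G(10,3)$, not $Dod$. Consequently your claim that ``the $4$ cycles of type~1 all lift nontrivially'' cannot hold for the cover you want.

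In the paper's construction the pentagons $P_0,P_1$ have voltage-sum $0$ and the pentagrams $Q_0,Q_1$ have voltage-sum $1$ (display~(\ref{eqn1}) exhibits the two $10$-cycles covering $Q_0,Q_1$). The $32$ trivially-lifting $5$-cycles of $\Gamma$ are then $P_0,P_1$ together with the $10$ cycles of type~2 and the $20$ cycles of type~4; the $22$ nontrivially-lifting ones are $Q_0,Q_1$ and the $20$ cycles of type~3. Since the ``good'' $5$-cycles are exactly those of types~1 and~2, the trivially-lifting set contains $2+10=12$ good and $20$ bad, giving $24$ and $40$ in $\Lambda$ as stated. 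With your mistaken voltage assignment you would be hunting for a $32$--$22$ split entirely inside types~2,~3,~4, and the good/bad split would come out $20$/$44$ rather than $24$/$40$.

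Fix the voltage assignment so that the $P_i$ get sum $0$ and the $Q_j$ get sum $1$, and your outline goes through.
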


\begin{proof}
The upper-right Figure~\ref{penta20} contains a representation of $\Lambda$ inheriting the induced colorings of the TC of $\Gamma$, where each vertex X in $\Gamma$ is represented by two vertices $X$ and $\ul{X}$ in $\Lambda$, with X$\in\{$A, $\ldots, $Z$\}$. The TC of $\Lambda$ is well defined and compatible with the inherited colorings.
Moreover, $\Lambda$ is the edge-disjoint union of the following cycles, two of length 20 (threading through the four copies of $Pet$) and two of length 10 (twice shown in the copies):
\begin{eqnarray}\begin{array}{l}\label{eqn1}
(A,B,C,D,E,F,G,H,I,J,K,L,M,N,O,P,Q,R,S,T)=(v_0,v_1,\ldots,v_{19});\\
(\ul{A},\ul{B},\ul{C},\ul{D},\ul{E},\ul{F},\ul{G},\ul{H},\ul{I},\ul{J},\ul{K},\ul{L},\ul{M},\ul{N},\ul{O},\ul{P},\ul{Q},\ul{R},\ul{S},\ul{T})=(w_0,w_1,\ldots,w_{19});\\
(\ul{B},N,\ul{F},R,\ul{J},B,\ul{N},F,\ul{R},J)=(w_1,v_{13},w_5,v_{17},w_9,v_1,w_{13},v_5,w_{17},v_9);\\
(\ul{T},L,\ul{D},P,\ul{H},T,\ul{L},D,\ul{P},H)=(w_{19},v_{11}w_3,v_{15},w_7,v_{19},w_{11},v_3,w_{15},v_7).\end{array}\end{eqnarray}
Each 5-cycle in the set of 32 5-cycles formed by $P_0,P_1$ and all those in the second, fifth and sixth columns in Figure~\ref{penta20} lifts into two 5-cycles of $\Lambda$, yielding a total of 64 5-cycles of $\Lambda$, e.g. $(A,E,I,M,Q)=(v_0,v_4,v_8,v_{12},v_{16})$, $(\ul{A},\ul{E},\ul{IM},\ul{Q})=(w_0w_4w_8w_{12}w_{16})$, etc. 
The remaining 22 5-cycles in the columns in Figure~\ref{penta20} are lifted onto 22 corresponding 10-cycles of $\Lambda$, including the 10-cycles in display (\ref{eqn1}).  
\end{proof}

\begin{theorem}
The Robertson 19-vertex $(4,5)$ cage~\cite{Rob} admits a TC which is nonefficient.
\end{theorem}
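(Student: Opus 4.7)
The plan has two parts. For the nonefficiency half, a counting argument suffices: if $S\subseteq V(\Gamma)$ is an EDS in a $k$-regular graph, the closed neighborhoods $\{N[v]:v\in S\}$ are pairwise disjoint, each of size $k+1$, and cover $V(\Gamma)$, so $|S|=|V(\Gamma)|/(k+1)$. Any ETC with $k+1$ colors therefore requires $(k+1)\mid|V(\Gamma)|$. For the Robertson $(4,5)$-cage we have $k+1=5$ and $|V(\Gamma)|=19$, and $5\nmid 19$, so no ETC exists. Every TC of this graph is thus automatically nonefficient, and it is enough to produce a single TC with $5$ colors.

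To exhibit such a TC, I would work on a fixed labeling of the $19$ vertices of the Robertson graph, such as the one underlying Figure~\ref{penta20}. My approach would be to begin by coloring a distinguished small substructure (for instance, a single $5$-cycle of the graph together with its five incident edges, using all five colors in a rainbow pattern reminiscent of the pentagons and pentagrams appearing in the preceding Theorem~\ref{Gamma}), then to propagate the forced color assignments at neighboring vertices under the standard TC constraints, and finally to resolve any remaining freedom through a short backtracking search. Local verification at each vertex $v$ then reduces to checking that $v$ and its four incident edges receive pairwise distinct colors, that each incident edge differs in color from both of its endpoints, and that no two edges sharing an endpoint repeat a color.

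The main obstacle is the construction itself: unlike the planar and toroidal cubic families treated earlier in this paper, the Robertson graph possesses no periodic or lattice structure that would permit a uniform propagation, and its automorphism group, although transitive on vertices, does not act transitively on edges or $2$-arcs in a way that obviously reduces the coloring problem. The $5$-TC must therefore be discovered by a finite search on $19$ vertices and $38$ edges, and then checked element by element. Once it is displayed, the divisibility obstruction of the first paragraph immediately promotes it to a nonefficient TC, completing the proof.
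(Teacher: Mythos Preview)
Your divisibility argument for nonefficiency is entirely correct and in fact cleaner than what the paper offers: since an EDS $S$ in a $k$-regular graph forces $(k+1)\,|S|=|V(\Gamma)|$, and $5\nmid 19$, the Robertson graph has no EDS at all, hence no ETC, and any TC you produce is automatically nonefficient. The paper, by contrast, argues nonefficiency by observing that most girth $5$-cycles fail to have vertex and edge sets in bijection with $[5]$; strictly speaking that only rules out the stronger ETGC/VEGC property, whereas your counting argument dispatches the ETC property directly and unconditionally. So on this half you have improved on the paper.

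The gap is in the other half. The theorem asserts that a $5$-TC \emph{exists}, and you have not produced one: you only describe how you \emph{would} search for one. That is a plan, not a proof. There is no general theorem guaranteeing that a $4$-regular graph of girth $5$ has total chromatic number $\Delta+1=5$ rather than $\Delta+2=6$, so existence is genuinely at stake and must be established by an explicit coloring. The paper does exactly this: it exhibits a concrete $5$-TC on the labeled vertex set $\{A,\ldots,S\}$ in Figure~\ref{penta20}, with the five colors displayed. Your proposal needs the analogous artifact---an explicit assignment of colors to all $19$ vertices and $38$ edges, together with the routine local checks---before it becomes a proof. Once you supply that table, your divisibility observation finishes the argument in one line.
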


\begin{proof} Let $\Gamma$ be the Robertson graph.
The claimed TC of $\Gamma$ is represented to the bottom of Figure~\ref{penta20}, where $V(\Gamma)=\{A,B,\ldots, S\}=\{v_0,v_1,\ldots,v_{18}\}$, with colors red=0, blue=1, hazel=2, black=3 and green =4.
Notice that $\Gamma$ has 44 5-cycles, but only four of them have both their vertex sets and edge sets in bijection with $[5]$ under the TC, namely $(DMIFC)$, $(AINGL)$, $(SFKJO)$ and $(SENGP)$. Thus, the shown TC of $\Gamma$ is nonefficient. In addition, only five 5-cycles has their edge sets in bijection with $[5]$, but not their vertex sets, namely $(ABEHL)$, $(SEBCF)$, $(ABCGL)$, $(DCBJM)$ and $(ARKJB)$. The remaining 35 4-cycles do not have neither their edge sets not their vertex sets in bijection with $[5]$.
\end{proof}

\begin{theorem}\label{kaput}
Let $k\ge 2$ be not divisible by 5. Then, the graphs $Pet^k$ and $Dod^k$ ($k\ge 2$) admit TCs which are nonefficient.
\end{theorem}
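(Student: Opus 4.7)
The plan is to prove both parts by periodically extending the total colorings of Theorem~\ref{Gamma} and the preceding proposition on $Dod^2$, and then arguing that the non-efficiency witnessed there persists under the extension.

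For $Pet^k$, place the vertices $v_0,\ldots,v_{10k-1}$ on the Hamilton $10k$-cycle $\Sigma$ and take the $2k$ chord 5-cycles to be the orbits of the step-$2k$ chord $v_iv_{i+2k}\pmod{10k}$; this specializes to the setting of Theorem~\ref{Gamma} when $k=2$ (step $4$). Define the TC by $c(v_i)=(2-i)\bmod 5$, color the Hamilton edges by the period-$10$ pattern $(3,4,1,2,4,0,2,3,0,1)$ of display~(\ref{ve}), and color each chord edge by the rule implicit in display~(\ref{pq}): at each endpoint three or four colors are already forbidden by the vertex color and the two incident Hamilton-edge colors, and one assigns the unique color consistent at both ends. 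Since $10\mid 10k$ the Hamilton pattern closes around $\Sigma$, and since $5\nmid k$ one has $\gcd(2k,5)=1$, so consecutive vertices of every chord 5-cycle have distinct colors and in fact the five vertices of each chord 5-cycle receive all of $[5]$. Properness of the full TC then reduces, by the period-$10$ / period-$5$ structure, to the finite verification already done for $k=2$.

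Non-efficiency transfers by the same periodicity. Theorem~\ref{Gamma} exhibits a color $c$ and a vertex $v\in V(Pet^2)\setminus S_c$ with $|N[v]\cap S_c|\neq 1$; each of the $k$ translates of $v$ in $Pet^k$ has an identically colored closed neighborhood and witnesses that $S_c$ fails to be an EDS. For $Dod^k$, use the natural $2$-to-$1$ cover $Dod^k\to Pet^k$ extending the $k=2$ cover described in the preceding proposition: pulling back the TC produces a proper TC of $Dod^k$, and each of the two fibre-preimages of the bad vertex $v$ inherits the same failing count $|N[\cdot]\cap S_c|\neq 1$, so non-efficiency passes to $Dod^k$.

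The main obstacle will be verifying the global consistency of the chord-edge color assignment around each chord 5-cycle: one must check that the five locally forced colors form a permutation of $[5]$, and not merely a valid choice at each individual vertex. The hypothesis $5\nmid k$ enters precisely here, since $\gcd(2k,5)=1$ is what puts the vertex colors and the induced chord-edge colors into synchronized $\mathbb{Z}/5$ rotations around each chord 5-cycle; if $5\mid k$ even the vertex coloring $c$ already fails on these cycles. A minor subsidiary point for $Dod^k$ is picking the $\mathbb{Z}/2$-cover data so that the pulled-back TC remains proper on the fibre-joining edges, handled exactly as in the proposition for $k=2$.
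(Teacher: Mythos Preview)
Your periodic-extension approach has a genuine gap that makes it fail for roughly half the admissible values of $k$. The vertex coloring $c(v_i)=(2-i)\bmod 5$ and the period-$10$ Hamilton-edge pattern from display~(\ref{ve}) do close up around the $10k$-cycle, but the chord edges join $v_i$ to $v_{i\pm 2k}$, and the local constraints at the far end depend on $2k\bmod 10$, i.e.\ on $k\bmod 5$, not merely on whether $5\mid k$. Concretely, for $k=4$ the chord edge $(v_0,v_8)$ already has all five colors forbidden before any chord is colored: $c(v_0)=2$, $c(v_8)=4$, and the four incident Hamilton edges carry colors $c(e_{39})=1$, $c(e_0)=3$, $c(e_7)=3$, $c(e_8)=0$, exhausting $[5]$. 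The same obstruction appears for $k\equiv 1\pmod 5$ (e.g.\ $k=6$, edge $(v_0,v_{12})$). Hence the sentence ``properness of the full TC then reduces \dots\ to the finite verification already done for $k=2$'' is false: the chord step $2k$ is not a period-$10$ invariant, and the $k=2$ check transfers only to $k\equiv 2,3\pmod 5$.

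The paper sidesteps this by \emph{not} extending the $k=2$ coloring verbatim. It defines a genuinely $k$-dependent TC, specifying the color-$0$ class directly as
\[
\bigcup_{j=0}^{k-1}\bigl(\{0,\,5,\,(3,4),\,(6,7),\,(2k,8k),\,(4k,6k)\}+10j\bigr)
\]
and obtaining the other classes by the shift $+i$. The designated chord edges $(2k,8k)$ and $(4k,6k)$ move with $k$; that is what makes the construction uniform across residues $k\bmod 5$. Your ``main obstacle'' paragraph correctly locates the difficulty, but the $\gcd(2k,5)=1$ observation controls only the \emph{vertex} colors around a chord $5$-cycle; it says nothing about the Hamilton-edge colors at the chord endpoints, which is precisely where the clash occurs. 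The $Dod^k$ argument is moot until the $Pet^k$ construction is repaired.
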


\begin{proof}
Let $k\ge 2$ be not divisible by 5. Denote the Hamilton $10k$-cycle of $Pet^k$ to which the additional $k$ pentagon and $k$ pentagram 5-cycles of $Pet^k$ are joined alternatively as
$$C=(0,1,2,\ldots,10k-2,10k-1).$$ Such pentagons are expressible as $C_j=(0,2k,4k,6k,8k)+2j$ and such pentagrams are expressible as $C'j=(0,4k,8k,2k,6k)+2j+1$, for $0\le j<k$, where addition of $2j$ of $2j+1$ affects all the terms of the 5-cycles. To get the claimed nonefficient TC of $Pet^k$, 
we define the set of vertices and edges having color 0 as follows:
$$\bigcup_{j=0}^{k-1}(\{0,(2k,8k),(4k,6k),5,(3,4),(6,7)\}+10j),$$ where the operation of adding $10j$, ($0\le j<k$), means adding $10j$ to each vertex or edge endvertex.
Notice that this is not well defined if $k$ is divisible by 5, for in such a case each pentagon or pentagram is monochromatic.
To obtain the set of vertices and edges having color $i$, ($0\le i<k$), we just add $i$ to each vertex of edge endvertex of color 0. This yields a nonefficient TC of  
$Pet^k$. Then, it is elementary to extend this to a nonefficient TC of $Dod^k$, for which we recall that $V(Dod)$ is formed by: {\bf(a)} two $10k$ cycles $C$ and $C'$, where $C$ is given above and $C'=0',1',2',\ldots,(10k-1)'$; {\bf(b)} the $2k$ 5-cycles $C_j$ and $C'_j$ given above plus the $2k$ new 5-cycles $C''_j=(0',(2k)',(4k)',(6k)',(8k)')+2j$ and $C'''j=(0',(4k)',(8k)',(2k)',(6k)')+2j+1$. In addition, for each edge $(x,y)$ of $Pet^k$, we have two edges of $Dod^k$, namely $(x,y')$ and $(x',y)$.
\end{proof}

\begin{example}
Figure~\ref{caputti} sketches in gray the graphs $Pet^k$, for $k=2,3,4,6$, except for those vertices and edges having color 0, which are indicated in red or with a red 0.
\begin{figure}[htp]
\includegraphics[scale=0.88]{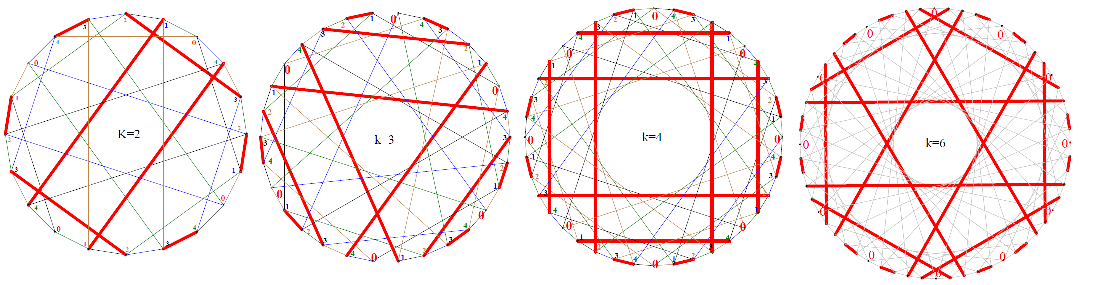}
\caption{Stressing in red those vertices and edges in color 0 in $Pet^k$, for $k=2,3,4,6$.}
\label{caputti}
\end{figure}
\end{example}

\begin{corollary}
For $k>2$ not divisible by 5, the $2k$ 5-cycles formed by the $k$ pentagons and $k$ pentagrams of $Pet^k$ are the only 5-cycles of $Pet^k$. They form a set of $2k$ disjoint 5-cycles and their union $U$ is the complement of the Hamilton cycle $C$ of $Pet^k$. The restriction of the TC in Theorem~\ref{kaput} to each of these 5-cycles establishes bijections between their vertex sets and edge sets with the color set $[5]$. A similar result holds for $Dod^k$, where $U$ is enlarged to the union $U'$ of the $4k$ 5-cycles of $Dod^k$. In each such 5-cycle each color is assigned just to a specific vertex and to its opposite edge.
\end{corollary}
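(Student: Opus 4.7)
The plan is to establish four claims in sequence: (i) the only 5-cycles of $Pet^k$ are the $2k$ pentagons and pentagrams; (ii) these 5-cycles are pairwise vertex-disjoint and together account for the edge complement of the Hamilton cycle $C$; (iii) the TC produced in the proof of Theorem~\ref{kaput} restricts to each such 5-cycle as a bijection onto $[5]$, with each vertex opposite to the edge of its own color; and (iv) the analogous statements for $Dod^k$ follow by passing to the double cover.

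For part (i), I would label the edges of $Pet^k$ as \emph{short} (the $10k$ edges of $C$, of $C$-gap $1$) or \emph{long} (the $10k$ chord edges, of $C$-gap $2k$ in pentagons and $4k$ in pentagrams). Short edges flip the parity of the vertex index while long edges preserve it, so any cycle uses an even number of short edges. In a 5-cycle the number $c$ of short edges is therefore in $\{0,2,4\}$. If $c=0$, then because both long edges at any vertex lie in the unique pentagon or pentagram containing that vertex, any walk along long edges stays inside one 5-cycle, forcing the cycle to be one of the $2k$ pentagons/pentagrams. If $c=4$, the single long edge would join two vertices at $C$-distance $4$, contradicting $\min(2k,4k,6k,8k)>4$ for $k>2$. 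If $c=2$, the three long edges form either a single chord-path of length $3$ inside one pentagon or pentagram $P$, or a chord-path of length $2$ in some $P$ together with a single chord-edge in some (possibly different) $Q$; the signed $C$-displacement around the cycle must vanish modulo $10k$, and enumerating the resulting equations $\pm 3g_P\pm 1\pm 1\equiv 0$ and $\pm 2g_P\pm g_Q\pm 1\pm 1\equiv 0\pmod{10k}$ with $g_P,g_Q\in\{2k,4k\}$ rules out every case for $k>2$. This sign-and-gap enumeration is the main obstacle, though each subcase reduces to checking that a short linear expression in $k$ is nonzero modulo $10k$.

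For part (ii), the $2k$ pentagons and pentagrams partition $V(Pet^k)$ into $2k$ blocks of $5$ vertices each (even vertices into pentagons, odd into pentagrams), hence are vertex-disjoint and contribute $5\cdot 2k=10k$ edges; together with the $10k$ edges of $C$ this exhausts the $20k$ edges of the $4$-regular graph $Pet^k$, so $U=E(Pet^k)\setminus E(C)$. For part (iii), I would invoke the explicit construction of the TC in the proof of Theorem~\ref{kaput}: color $i$ is obtained from color $0$ by a cyclic shift in the vertex index, so the color of each vertex depends only on its residue modulo $5$. Since $k$ is not divisible by $5$, $\gcd(2k,5)=\gcd(4k,5)=1$, so the five vertices of each pentagon (arithmetic progression with common difference $2k$ in $\mathbb{Z}_{10k}$) and of each pentagram (common difference $4k$) realize all five residues mod $5$; all five colors therefore appear on the vertex set of each 5-cycle. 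An identical cyclic-shift argument applied to the color-$0$ edges in the formula of Theorem~\ref{kaput} gives the analogous $[5]$-bijection on the five edges of each 5-cycle. The ``opposite edge'' assertion then follows because the cyclic offset between the color-$0$ vertex and the color-$0$ edge inside a given pentagon or pentagram is a fixed geometric invariant of the 5-cycle, preserved by the shifts that produce the other four colors.

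For part (iv), the double cover $Dod^k$ of $Pet^k$ lifts each pentagon or pentagram of $Pet^k$ to two vertex-disjoint 5-cycles in $Dod^k$, yielding the $4k$ 5-cycles of the enlarged union $U'$, exactly as in the decomposition into covering cycles exhibited in the proof of Theorem~\ref{kaput}. The TC on $Dod^k$ inherits the local color pattern of $Pet^k$ on each lifted 5-cycle, so the $[5]$-bijection on vertices and edges and the vertex/opposite-edge correspondence transfer verbatim to each of the $4k$ 5-cycles of $Dod^k$.
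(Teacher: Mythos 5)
Your reduction to signed $C$-displacement equations is the right framework, but the enumeration in the case $c=2$ is not actually carried out correctly, and the sub-case it overlooks is fatal. Take a chord-path of length $2$ in a pentagon ($2g_P=4k$) together with a single pentagram chord ($g_Q=4k$) and the two Hamilton edges traversed in opposite directions: the equation $+2g_P-g_Q+1-1=4k-4k+0=0$ holds identically, for every $k$. This solution is not vacuous: the vertex sequence $(0,\,2k,\,4k,\,4k{+}1,\,1)$ is an honest $5$-cycle of $Pet^k$ for every $k\ge1$, consisting of the two pentagon edges $(0,2k)$ and $(2k,4k)$, the Hamilton edge $(4k,4k{+}1)$, the pentagram edge $(4k{+}1,1)$ (both endpoints odd, differing by $4k$), and the Hamilton edge $(1,0)$. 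Symmetrically, $2g_P+g_Q\equiv 0\pmod{10k}$ with $g_P=4k$ and $g_Q=2k$ gives $8k+2k\equiv 0$, realized by the $5$-cycle $(0,\,1,\,4k{+}1,\,8k{+}1,\,8k)$. These are exactly the ``type 3'' and ``type 4'' $5$-cycles that the paper itself exhibits for $k=2$ in Theorem~\ref{Gamma}, and nothing about them disappears when $k>2$.

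Consequently the first assertion of the corollary --- that for $k>2$ the $k$ pentagons and $k$ pentagrams are the \emph{only} $5$-cycles of $Pet^k$ --- is false, so no repair of your case analysis can succeed; the paper's own one-sentence justification (that $k>2$ forces all other cycles to have length greater than $5$) is an unsupported assertion contradicted by the same examples. Your parts (ii)--(iv) are fine as far as they go: the disjointness and the count $5\cdot 2k+10k=20k$ showing $U=E(Pet^k)\setminus E(C)$, the observation that $\gcd(2k,5)=\gcd(4k,5)=1$ makes each pentagon and pentagram hit all five colors on vertices and on edges, and the lift to the double cover are all correct and essentially what the paper intends. But the uniqueness claim in part (i) must either be dropped or replaced by an explicit accounting of the additional mixed $5$-cycles.
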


\begin{proof}
The higher value of $k>2$ allows only cycle lengths larger than 5 for those cycles which are not the $k$ pentagons and $k$ pentagrams attached alternatively to the vertices of the Hamilton $10k$-cycle $C$.
\end{proof}

\end{document}